\documentclass[final,onefignum,onetabnum,dvipsnames]{siamart251216}
\usepackage{amsmath, amssymb, mathtools}
\usepackage[absolute,overlay]{textpos}
\usepackage{graphicx}
\usepackage[nameinlink]{cleveref}
\usepackage{algorithm}
\usepackage{algpseudocode}
\algtext*{EndIf}
\algtext*{EndFor}
\algtext*{EndWhile}
\numberwithin{algorithm}{section}
\usepackage{multirow}
\usepackage{comment}

\numberwithin{equation}{section}
\newsiamremark{remark}{Remark}
\newsiamthm{property}{Property}
\crefname{theorem}{Theorem}{Theorems}
\crefname{corollary}{Corollary}{Corollaries}
\crefname{subsection}{subsection}{subsections}
\Crefname{subsection}{Subsection}{Subsections}
\crefname{algorithm}{Algorithm}{Algorithms}
\crefname{table}{Table}{Tables}
\crefname{property}{Property}{Properties}
\crefname{figure}{Figure}{Figures}
\crefname{appendix}{Appendix}{Appendices}

\newcommand{\colorone}{NavyBlue}
\newcommand{\colortwo}{Green}

\newcommand{\col}[1]{\textcolor{blue!60}{#1}}

\DeclareMathOperator*{\argmin}{\mathrm{argmin}}

\headers{A Jacobi-like algorithm for normal matrices}{Simon Mataigne and P.-A.~Absil}

\title{A Jacobi-like algorithm for normal matrices by the skew-symmetric part\thanks{Submitted to the editors.\funding{Simon Mataigne is a Research Fellow of the Fonds de la Recherche Scientifique - FNRS. This work was supported by the Fonds de la Recherche Scientifique - FNRS under Grant no T.0001.23.}}}

\author{Simon Mataigne\thanks{ICTEAM Institute, UCLouvain, B-1348 Louvain-la-Neuve, Belgium, (\email{simon.mataigne@uclouvain.be}, \email{pa.absil@uclouvain.be}).}
\and P.-A.~Absil\footnotemark[2]
}

\usepackage{amsopn}

\ifpdf
\hypersetup{
  pdftitle={A Jacobi-like algorithm for normal matrices by the skew-symmetric part},
  pdfauthor={Simon Mataigne}
}
\fi

\begin{document}

\maketitle

\begin{abstract}
We present a fast Jacobi-like algorithm for computing the eigenvalues, and optionally the eigenvectors, of a real normal matrix. The method gains a computational advantage by using Paardekooper's method for skew-symmetric matrices The method is most efficient for matrices where most eigenvalues are complex, such as random orthogonal matrices arising in the context of statistics on manifolds. In this case, the method is faster than the other Jacobi-like algorithms. In the last section of this paper, we also give explicit formulas for the nearest symmetric skew-Hamiltonian and the nearest ortho-symplectic matrix. These problems arise in the design and the analysis of the algorithm.
\end{abstract}

\begin{keyword}
Normal matrices, orthogonal matrices, Jacobi, eigenvalues.
\end{keyword}

\begin{MSCcodes}
15A18, 15A20, 65F15, 15B10, 15B57, 62R30.
\end{MSCcodes}

\section{Introduction}
Jacobi's algorithm is a famous and one of the oldest algorithms~\cite{jacobi1846leichtes} for solving the symmetric eigenvalue problem (EVP)~\cite{gregory1953computing,GoldstineNeumann59,Rutishauser1966}. Over the years, many variants have been proposed for solving other EVPs, subsumed under the name \emph{Jacobi-like algorithms}. While the operation count of these methods is higher than that of competitors such as the QR algorithm, Jacobi-like algorithms have regained popularity because of their high parallelizability. The shared idea of many Jacobi-like algorithms is to repeatedly solve small-size problems obtained from submatrices of the original matrix. In particular, the classical algorithm for symmetric matrices relies on the explicit solution to the $2\times 2$ symmetric EVP. In 1971, Paardekooper~\cite{Paardekooper1971, Hacon1993} proposed an algorithm for (real) skew-symmetric matrices based on the explicit solution to the $4\times4$ skew-symmetric EVP. This algorithm converges to the real Schur form. For normal matrices, this form is the real-valued analog of the eigenvalue decomposition. Later on, other specialized algorithms were proposed, e.g., for Hamiltonian and skew-Hamiltonian matrices~\cite{FABENDER200137}.

In 1959, a generalization of Jacobi's algorithm was proposed by Goldstine and Horwitz to solve the normal eigenvalue problem~\cite{Goldstine59}. Instead of diagonalizing submatrices, it focuses on decreasing the norm of off-diagonal elements. This efficient algorithm converges quadratically~\cite{Ruhe1967} but relies on complex-valued arithmetic. If the input matrix $A$ is real-valued, relying on complex transformations ignores the additional structure of real matrices and yields complex arithmetic, which is more costly than real arithmetic. 
It is well-known that the symmetric and skew-symmetric parts of a real normal matrix commute. Therefore, the real Schur form can also be computed by the procedure of Bunse-Gerstner et al.~\cite[Algorithm~2]{BunseGerstner1993} for the simultaneous diagonalization of commuting matrices. Moreover, in 2003, an algorithm using only real arithmetic was also proposed by Zhou and Brent~\cite{ZhouBrent2003}. It relies on the assumption that a subroutine can compute the real Schur form of an arbitrary $4\times 4$ matrix. This method is essentially equivalent to~\cite[Algorithm~2]{BunseGerstner1993} and the idea outlined in~\cite[p.~178]{Veselic1979b}.

Finally, other Jacobi-like algorithms, based on non-unitary transformations, were proposed for arbitrary matrices, either real~\cite{Eberlein68} or complex~\cite{Eberlein62,Eberlein70}. For real matrices with complex eigenvalues, the convergence of~\cite{Eberlein68} is slow~\cite{Veselic76}. This lead to the development of the quadratically convergent method of Veseli\'c~\cite{Veselic79} for arbitrary real matrices with complex eigenvalues. Early ideas of taking advantage of the skew-symmetric part for arbitrary matrices can be found in Veseli\'c's method. 

In this paper, we extend the idea of exploiting the skew-symmetric part for real normal matrices, introduced in \cite{mataignegallivan2025}, to develop a Jacobi-like algorithm for the real normal EVP. The method is based on the sequential application of modified versions of Paardekooper’s method, Jacobi’s algorithm for symmetric skew-Hamiltonian matrices~\cite{FABENDER200137}, and related techniques. The fast convergence of Paardekooper's method~\cite{Rhee1995} allows this new method to outperform other Jacobi-like algorithms for normal matrices. We analyze the behavior of the algorithm through a theoretical perturbation analysis building on that of~\cite{mataignegallivan2025}. Finally, we demonstrate the speed, accuracy, and robustness of the proposed method through a set of numerical experiments.

The real normal eigenvalue problem plays a central role in computational applications. It arises in statistical inference and optimization on the special orthogonal group and Stiefel manifolds, where preserving orthogonality and exploiting normal structure are essential for stable and geometry-preserving computations~\cite{ChakrabortyVemuri2019, ZimmermannHuper22, RentmeestersQ, mataigne2024, krakowski07}. It also appears as a fundamental building block in algorithms for generalized eigenvalue problems, where reductions to structured normal forms can improve numerical robustness and efficiency~\cite{CHARLIER1990,BUNSEGERSTNER1991741}. In signal processing, the real normal EVP underlies the computation of Pisarenko frequency estimates, a classical technique for high-resolution spectral estimation and harmonic retrieval~\cite{Ammar1988determination,Cybenko85}.
\paragraph{Organization of the paper} We start by giving an overview of the real normal EVP in \cref{sec:theorems}. In particular, we recall its relation with that of the skew-symmetric part~\cite{mataignegallivan2025}. Then, in \cref{sec:paardekooper}, we recall the functioning of Paardekooper's method for skew-symmetric matrices and propose an \emph{implicit} variant. We illustrate the functioning of our algorithm on a simple instance in \cref{sec:example} and then, in \cref{sec:phases}, we present it in detail. In \cref{sec:floating_arithmetic}, we conduct a perturbation analysis which explains the design and the behavior of the algorithm. Robustness, running time and accuracy are tested in \cref{sec:numerical_experiments}. Finally, relevant results, used in \cref{sec:floating_arithmetic}, about the nearest symmetric skew-Hamiltonian matrix and the nearest ortho-symplectic matrix are given in \cref{sec:SSH_and_os}.


\paragraph{Reproducibility} An implementation of the method, as well as the codes to reproduce all experiments presented in this paper, are available at the address \url{https://github.com/smataigne/NormalJacobi.jl}.
\paragraph{Notation} $I_n$ denotes the $n\times n$ identity matrix. The orthogonal group and the special orthogonal group of $n\times n$ matrices are denoted, respectively, by $\mathrm{O}(n)$ and $\mathrm{SO}(n)$. For $n\geq k$, the Stiefel manifold of orthonormal $k$-frames in $\mathbb{R}^n$ is denoted by $\mathrm{St}(n,k)\coloneq \{V\in\mathbb{R}^{n\times k}\ | \ V^\top V = I_k\}$. $A\otimes B$ refers to the Kronecker product of two matrices $A$ and $B$. $\mathrm{Sym}(n)$ and $\mathrm{Skew}(n)$ denote, respectively, the sets of symmetric and skew-symmetric matrices of size $n$. We use $\mathrm{sym}(A) = \frac{1}{2}(A+A^\top)$ and $\mathrm{skew}(A) = \frac12(A-A^\top)$ for, respectively, the symmetric and the skew-symmetric part of the matrix~$A$. Finally, an even-odd permutation matrix $P_{\mathrm{eo}}\in\mathrm{O}(n)$ is a permutation matrix such that for every vector $[a_1\ a_2\ ...\  a_n]\in\mathbb{R}^{1\times n}$, we have 
\begin{equation*}
	[a_1\ a_2\ ...\  a_n]P_{\mathrm{eo}} = [a_1\ a_3\ ...\ a_{\lceil\frac{n}{2}\rceil}\ a_2\ a_4\ ... \ a_{\lfloor\frac{n}{2}\rfloor}]. 
\end{equation*}
\section{The normal eigenvalue problem by the skew-symmetric part}\label{sec:theorems}
The normal matrices are the matrices that can be diagonalized under complex unitary similarity transformation. However, if we consider \emph{real} matrices only, it is natural to restrict the similarity transformation to \emph{orthogonal} matrices, and thus real arithmetic. Then, the corresponding canonical form is the real Schur form, i.e., a block diagonal matrix with $1\times 1$ and/or $2\times 2$ diagonal blocks. The $2\times 2$ blocks have the form of~\eqref{eq:2x2block}, which is easily related to its eigenvalues:
\begin{equation}\label{eq:2x2block}
	\lambda \begin{bmatrix}
	\cos(\theta)&-\sin(\theta)\\
	\sin(\theta)&\cos(\theta)
	\end{bmatrix} = \frac{1}{2}\begin{bmatrix}
	1&1\\
	i&-i
	\end{bmatrix}
	\begin{bmatrix}
	\lambda e^{-i\theta}&0\\
	0&\lambda e^{i\theta}
	\end{bmatrix}
	\begin{bmatrix}
	1&-i\\
	1&i
	\end{bmatrix}.
\end{equation}
The real Schur form is sometimes called Murnaghan's form~\cite{MurnaghanWintner1931}. For normal matrices, the following theorem holds.
\begin{theorem}[{\cite[Thm.~2.5.8]{HornJohnson2013}}]\label{prop:normalEVP}
	A matrix $A\in\mathbb{R}^{n\times n}$ is normal if and only if it admits a \emph{real Schur decomposition (RSD)} by an orthogonal transformation $Q\in\mathrm{O}(n)$, i.e.,
	\begin{equation*}
		A = QSQ^\top,
	\end{equation*}
	where $S\in \mathbb{R}^{n\times n}$ is block diagonal with $r\geq 0$ blocks of size $1\times 1$ and $p\coloneq \frac{n - r}{2}$ blocks of size $2\times 2$ of the form~\eqref{eq:2x2block}. The columns of $Q$ are termed \emph{Schur vectors}. W.l.o.g., the first subdiagonal of $S$ is imposed nonnegative.
\end{theorem}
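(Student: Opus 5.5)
The plan is to prove the two directions separately, with the "only if" direction carrying essentially all the work.

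For the "if" direction, suppose $A = QSQ^\top$ with $Q\in\mathrm{O}(n)$ and $S$ block diagonal as in the statement. Each $1\times 1$ block commutes with its own transpose. Each $2\times 2$ block of the form~\eqref{eq:2x2block} equals $\lambda$ times a rotation $R_\theta$, and
\[
\lambda R_\theta(\lambda R_\theta)^\top=\lambda^2 I_2=(\lambda R_\theta)^\top\lambda R_\theta .
\]
Hence $SS^\top = S^\top S$, and therefore
\[
AA^\top = QSS^\top Q^\top = QS^\top S Q^\top = A^\top A .
\]

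For the "only if" direction I would argue by induction on $n$, reducing to one invariant subspace of dimension $1$ or $2$ at a time. The key lemma is that if $A$ is real normal and $x\in\mathbb{C}^n$ satisfies $Ax=\mu x$, then $A^\top x=\bar\mu x$. This follows because
\[
\|(A^\top-\bar\mu I)x\|^2=\|(A-\mu I)x\|^2 ,
\]
which holds since $A-\mu I$ is normal; note that $A^\top = A^*$ for real $A$.

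Now pick an eigenvalue $\mu$ of $A$ and split into two cases.

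\emph{Real eigenvalue.} If $\mu\in\mathbb{R}$, take a real unit eigenvector $q$. By the lemma, $\mathrm{span}(q)$ is invariant under both $A$ and $A^\top$. Hence its orthogonal complement is also $A$-invariant.

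\emph{Non-real eigenvalue.} If $\mu=\lambda e^{i\theta}$ with $\sin\theta\neq 0$, write $x=u+iv$. Then $\bar x$ is an eigenvector for $\bar\mu$. Since $\mu\neq\bar\mu$ and $A$ is normal, $x\perp\bar x$ in the Hermitian inner product; this follows from the lemma via
\[
\mu\langle x,\bar x\rangle=\langle Ax,\bar x\rangle=\langle x,A^\top\bar x\rangle=\bar\mu\langle x,\bar x\rangle ,
\]
using the appropriate conjugations. From $x^\top x=0$ we get $\|u\|=\|v\|$ and $u^\top v=0$. So after normalization $u,v$ are orthonormal, and comparing real and imaginary parts of $Ax=\mu x$ gives
\[
A[u\ v]=[u\ v]\,\lambda\begin{bmatrix}\cos\theta&\sin\theta\\-\sin\theta&\cos\theta\end{bmatrix} .
\]
Replacing $v$ by $-v$ (or using $\bar x$ instead of $x$) yields exactly the form~\eqref{eq:2x2block} with nonnegative subdiagonal $\lambda\sin\theta$; we may take $\lambda>0$ and $\theta\in(0,\pi)$. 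The same lemma applied to $x$ and $\bar x$ shows that $\mathrm{span}(u,v)$ is also $A^\top$-invariant.

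In either case, choose an orthonormal basis completing the invariant subspace to form $Q_1\in\mathrm{O}(n)$. Then $Q_1^\top A Q_1$ is block diagonal, with the small block in the top-left corner and an $(n-1)$ or $(n-2)$-dimensional block $A'$. The block $A'$ is again real normal, since block-diagonal orthogonal similarity preserves $AA^\top=A^\top A$ blockwise. Applying the induction hypothesis to $A'$ and assembling the orthogonal factors gives $Q$ and $S$. The counts $r$ and $p=(n-r)/2$ then follow automatically.

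The main obstacle is the non-real case: proving that the real and imaginary parts of a complex eigenvector can be chosen orthogonal and of equal norm. This is exactly where normality enters, through the orthogonality of $x$ and $\bar x$, and where the sign convention fixing the subdiagonal must be handled. The remaining steps are routine bookkeeping.
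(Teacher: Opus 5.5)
Your proof is correct. The paper gives no argument of its own for this statement; it imports it from Horn and Johnson, where the result is reached by a different route:
\begin{enumerate}
\item Apply the real Schur theorem: every real matrix is orthogonally similar to an upper quasi-triangular matrix $T$.
\item Observe that a block upper triangular normal matrix is block diagonal. Equating the traces of the leading diagonal blocks of $TT^\top$ and $T^\top T$ gives $\|T_{12}\|_\mathrm{F}=0$, and one recurses.
\item Check directly that a real $2\times 2$ normal matrix with non-real eigenvalues must be $\left[\begin{smallmatrix} a & b\\ -b & a\end{smallmatrix}\right]$ with $b\neq 0$. This is a scaled rotation, and the sign of its subdiagonal can be fixed by conjugating with $\mathrm{diag}(1,-1)$.
\end{enumerate}

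You instead build normality into the deflation itself. Your lemma $A^\top x=\bar\mu x$ makes each invariant line or plane invariant under $A^\top$ as well, so its orthogonal complement splits off at once and no triangular form is ever needed. The price is that you must show by hand that the real and imaginary parts of a complex eigenvector are orthogonal and of equal norm, which you do cleanly via $x^\top x=0$.

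The textbook route is more modular: it reuses a theorem valid for all real matrices and confines the use of normality to a one-line norm comparison. Yours is self-contained and makes the Schur vectors of each $2\times 2$ block explicit, as the normalized real and imaginary parts of a complex eigenvector.
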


Letting $I_2=\left[\begin{smallmatrix}1&0\\ 0 &1\end{smallmatrix}\right] $ and $J_2=\left[\begin{smallmatrix}0&-1\\
1&0\end{smallmatrix}\right]$, there are positive diagonal matrices $\Lambda,\Theta\in\mathbb{R}^{p\times p}$ and a diagonal matrix $\breve{\Lambda}\in \mathbb{R}^{r\times r}$ with $2p+r=n$ such that 
\begin{equation}
	S = \begin{bmatrix}
	\Lambda\cos(\Theta) \otimes I_2+ \Lambda\sin(\Theta)\otimes J_2&0\\
	0&\breve{\Lambda}
	\end{bmatrix}.
\end{equation}
In~\cite[Alg.~4.1]{mataignegallivan2025} a QR-type algorithm is proposed for computing the RSD of a normal matrix $A$. In particular, the method gets a computational advantage by computing the RSD of the skew-symmetric part $$\Omega \coloneq \mathrm{skew}(A).$$ In the present paper, we further exploit the properties of the skew-symmetric part to design a Jacobi-like algorithm. Let us recall relevant results from~\cite{mataignegallivan2025}.


\subsection{Eigenvalues with distinct nonzero imaginary parts} Consider the real Schur decomposition of $\Omega = \mathrm{skew}(A)$:
\begin{equation*}
	\Omega  = Q_\Omega\begin{bmatrix}
	\Lambda\sin(\Theta)\otimes J_2&0\\
	0&0_r
	\end{bmatrix}Q_\Omega^\top.
\end{equation*} 
Let $V\in\mathrm{St}(n,2k)$ gather the $2k$ columns of $Q_\Omega$ corresponding to the $2\times 2$ blocks of $\Lambda\sin(\Theta)\otimes J_2$ with simple (i.e., non-repeated) eigenvalues of $\Omega$. Let $\Lambda_k \sin(\Theta_k)\otimes J_2$ be the block-diagonal matrix containing these $2\times 2$ blocks. Then, 
$$\Omega V = V (\Lambda_k \sin(\Theta_k)\otimes J_2),$$
and by~\cite[Theorem~3.3]{mataignegallivan2025},
\begin{align}
	\label{eq:evp_complex_distinct}
	AV = V (\Lambda_k\cos(\Theta_k) \otimes I_2+ \Lambda_k\sin(\Theta_k)\otimes J_2).
\end{align}
A consequence of \eqref{eq:evp_complex_distinct} is that the matrix $V^\top A V$ is block-diagonal and $V^\top A V$ is a submatrix of the Schur form $S$ of $A$.

\subsection{Eigenvalues with repeated nonzero imaginary parts}\label{sec:complex_repeated} Now, assume that exactly $m>1$ eigenvalues have the repeated imaginary part $\sigma>0$. Then there is a permuted subset $V\in\mathrm{St}(n, 2m)$ of the columns of $Q_\Omega$ verifying
\begin{equation*}
	\Omega V = V (J_2\otimes \sigma I_{m}) = V\begin{bmatrix}
	0&-\sigma I_m\\
	\sigma I_m&0
	\end{bmatrix},
\end{equation*}
and, by \cite[Thm.~3.4]{mataignegallivan2025}, there are $\widetilde{H}\in\mathrm{Sym}(m)$ and $\widetilde{\Omega}\in\mathrm{Skew}(m)$ such that 
\begin{equation}\label{eq:evp_complex_repeated}
	AV = V (I_2 \otimes \widetilde{H}+ J_2 \otimes (\widetilde{\Omega} + \sigma I_m))=V\begin{bmatrix}
	\widetilde{H}&-\widetilde{\Omega}-\sigma I_m\\
	\widetilde{\Omega}+\sigma I_m&\widetilde{H}
	\end{bmatrix}.
\end{equation}
The matrix $M=I_2 \otimes \widetilde{H}+ J_2 \otimes \widetilde{\Omega}=\left[\begin{smallmatrix}\widetilde{H}&-\widetilde{\Omega}\\
	\widetilde{\Omega}&\widetilde{H}\end{smallmatrix}\right] $ belongs to the set $\mathrm{SSkH}(2m)$ of $2m\times 2m$ \emph{symmetric skew-Hamiltonian (SSkH)} matrices \cite{FABENDER200137}:
\begin{equation*}
	\mathrm{SSkH}(2m)\coloneq \left\{\begin{bmatrix}
	\widetilde{H}&-\widetilde{\Omega}\\
	\widetilde{\Omega}&\widetilde{H}
	\end{bmatrix}\ | \ \widetilde{H}\in\mathrm{Sym}(m),\ \widetilde{\Omega}\in\mathrm{Skew}(m)\right\}.
\end{equation*} 
The eigenvalue decomposition of the matrix~$M\in\mathrm{SSkH}(2m)$ admits a form (see, e.g., \cite[Lem.~B.1]{mataignegallivan2025}) \begin{equation*}
	M=R(I_2\otimes D_m) R^\top = R\begin{bmatrix}
	D_m&0\\
	0&D_m
	\end{bmatrix}R^\top,
\end{equation*} 
where $D_m$ is diagonal and $R$ belongs to the set $\mathrm{OSp}(2m)$ of $2m\times 2m$ \emph{ortho-symplectic} matrices \cite[Eq.~5.1]{dopico09}, i.e., $R$ is orthogonal and commutes with $J_2\otimes I_m$. This property yields the following structure:
\begin{equation*}
	\mathrm{OSp}(2m)= \left\{\begin{bmatrix}
	U_\mathrm{r}&-U_\mathrm{i}\\
	U_\mathrm{i}&U_\mathrm{r}
	\end{bmatrix}\in\mathrm{O}(2m)\right\}.
\end{equation*} Moreover, the commutativity of $R$ and $J_2\otimes I_m$ yields the commutativity of $R$ and $\sigma J_2\otimes I_m$, and thus
\begin{equation*}
 A (V R)= (VR) R^\top (M+\sigma J_2\otimes I_m)R = (VR)\begin{bmatrix}
	D_m&-\sigma I_m\\
	\sigma I_m&D_m
	\end{bmatrix}.
\end{equation*}
Using an even-odd permutation $P_\mathrm{eo}$, we obtain the identity
\begin{equation*}
	P_\mathrm{eo}\begin{bmatrix}
	D_m&-\sigma I_m\\
	\sigma I_m&D_m
	\end{bmatrix}P_\mathrm{eo}^\top  = D_m\otimes I_2 + \sigma I_m\otimes J_2.
\end{equation*}
Since $D_m\otimes I_2 + \sigma I_m\otimes J_2$ is block diagonal, we conclude that it is a submatrix of the Schur form $S$ of $A$.

\subsection{Real eigenvalues} Finally, a similar result can be obtained for the $r$ real eigenvalues of $A$. Assume the matrix $V\in\mathrm{St}(n, r)$ gathers all columns of $Q_\Omega$ spanning the nullspace of $\Omega$, i.e.,
\begin{equation*}
	\Omega V = 0_r.
\end{equation*}
Then, by \cite[Thm.~3.5]{mataignegallivan2025}, there is a symmetric matrix $H\in\mathrm{Sym}(r)$ such that
\begin{equation}\label{eq:evp_real}
	AV = V H.
\end{equation}
By \eqref{eq:evp_real}, if $V$ is known and the real eigenvectors of $A$ are desired, computing the eigenvalue decomposition $H = \breve{R}\breve{\Lambda}\breve{R}^\top$ yields $A(V\breve{R}) = (V\breve{R})\breve{\Lambda}$.


\section{Paardekooper's algorithm for skew-symmetric matrices}\label{sec:paardekooper}
The starting point of the method proposed in this paper is computing the RSD of the skew-symmetric part $\Omega$ of the real normal matrix $A$.
The Jacobi-like algorithm for solving this problem is called Paardekooper's method for skew-symmetric matrices~\cite{Paardekooper1971}. Since this method is not commonly known, this section recalls the essential ideas of Paardekooper's method. It converges globally~\cite{Paardekooper1971}, and locally at least quadratically~\cite{Rhee1995} to the real Schur form.
\subsection{The $2\times 2$ Singular Value Decomposition}
Paardekooper's method relies on the explicit solution to the $2\times 2$ Singular Value Decomposition (SVD). Given a $2\times 2$ real matrix $\left[\begin{smallmatrix}a_{11}&a_{12}\\
	a_{21}&a_{22}\end{smallmatrix}\right]$, there is a pair of Givens rotations with angles $\alpha_1,\alpha_2\in\mathbb{R}$ such that
\begin{equation}\label{eq:2x2svd}
	\begin{bmatrix}
	\cos(\alpha_1)&\sin(\alpha_1)\\
	-\sin(\alpha_1)&\cos(\alpha_1)
	\end{bmatrix}
	\begin{bmatrix}
	a_{11}&a_{12}\\
	a_{21}&a_{22}
	\end{bmatrix}\begin{bmatrix}
	\cos(\alpha_2)&-\sin(\alpha_2)\\
	\sin(\alpha_2)&\cos(\alpha_2)
	\end{bmatrix} =\begin{bmatrix}
	d_1&0\\
	0& d_2
	\end{bmatrix},
\end{equation}
with $d_1,d_2\in\mathbb{R}$. Details on how to compute the angles $\alpha_1$ and $\alpha_2$ can be found, e.g., in~\cite[Section~1]{Paardekooper1971}.
\begin{remark}
The transformation in~\eqref{eq:2x2svd} uses a pair of \emph{rotations} (i.e., as opposed to \emph{reflections}) which does not allow enforcing the nonnegativity of $d_1$ and $d_2$.
\end{remark}
We use the following notation for Givens rotations of size larger than $2\times 2$. For integers $1\leq i\leq j\leq n$ and an angle $\alpha$, the Givens rotation $ G(\alpha, i, j)\in\mathrm{SO}(n)$ is
\begin{equation*}
	G(\alpha, i, j)_{k,l} = \begin{cases}
	1\quad &\text{if}\quad k = l \notin \{i, j\},\\
	\cos(\alpha)\quad &\text{if}\quad k = l \in\{i,j\},\\
	\sin(\alpha)\quad &\text{if}\quad  k = j\ \text{and}\ l = i,\\
	-\sin(\alpha)\quad &\text{if}\quad k = i\ \text{and}\ l = j,\\
	0\quad  &\text{ortherwise}.
	\end{cases}
\end{equation*}
\subsection{Paardekooper's step for a $4\times 4$ skew-symmetric submatrix}\label{sec:4x4skew}
Based on the decomposition \eqref{eq:2x2svd}, Paardekooper~\cite{Paardekooper1971} computes the real Schur form of a $4\times 4$ skew-symmetric matrix $\Omega$ in finitely many arithmetic operations. First, it requires computing rotations from \eqref{eq:2x2svd} for the submatrix $\left[\begin{smallmatrix}\omega_{21}&-\omega_{32}\\ \omega_{41}&\omega_{43}\end{smallmatrix}\right]$:
{\small
\begin{align*}
	G(\alpha_1,2,4)^\top
	\underbrace{\begin{bmatrix}
		0&-\omega_{21}&-\omega_{31}&-\omega_{41}\\
		\col{\omega_{21}}&0& \col{- \omega_{32}}&-\omega_{42}\\
		\omega_{31}&\omega_{32}&0&\omega_{43}\\
		\col{\omega_{41}}&\omega_{42}&\col{\omega_{43}}&0\\
	\end{bmatrix}}_{=\Omega}G(\alpha_2,1,3)
	= \begin{bmatrix}
		0&-\omega_{21}&-\omega_{31}'&-\omega_{41}\\
		\col{d_1}&0& \col{0}&-\omega_{42}'\\
		\omega_{31}'&\omega_{32}&0&\omega_{43}\\
		\col{0}&\omega_{42}'&\col{d_2}&0\\
	\end{bmatrix}.
\end{align*}}Notice that $G(\alpha_1,2,4)$ and $G(\alpha_2,1,3)$ commute and, hence, 
the eigenvalues are restored by completing the similarity transformation with
{\small
\begin{align*}
G(\alpha_2,1,3)^\top
	\begin{bmatrix}
		0&\col{-\omega_{21}}&-\omega_{31}'&\col{-\omega_{41}}\\
		d_1&0& 0&-\omega_{42}'\\
		\omega_{31}'&\col{\omega_{32}}&0&\col{\omega_{43}}\\
		0&\omega_{42}'&d_2&0\\
	\end{bmatrix}G(\alpha_1,2,4)
	= \underbrace{\begin{bmatrix}
		0&\col{-d_1}&-\omega_{31}''&\col{0}\\
		d_1&0& 0&-\omega_{42}''\\
		\omega_{31}''&\col{0}&0&\col{-d_2}\\
		0&\omega_{42}''&d_2&0\\
	\end{bmatrix}}_{\eqcolon \Omega^{(1)}}.
\end{align*}}Zeros have been introduced on the anti-diagonal of $\Omega^{(1)}$. A second pair of rotations gives the real Schur form of $\Omega$. The angles $\widehat{\alpha}_1,\widehat{\alpha}_2$ are obtained from the submatrix $\left[\begin{smallmatrix}d_1&-\omega_{42}''\\ \omega_{31}''&-d_2\end{smallmatrix}\right]$ of $\Omega^{(1)}$. Let $G\coloneq G(\widehat{\alpha}_1,2,3)G(\widehat{\alpha}_2,1,4)$.
Readily applying the full similarity transformation to $\Omega^{(1)}$ yields
{\small
	\begin{align*}
	G^\top
	\begin{bmatrix}
		0&-d_1&-\omega_{31}''&0\\
		\col{d_1}&0& 0&\col{-\omega_{42}''}\\
		\col{\omega_{31}''}&0&0&\col{-d_2}\\
		0&\omega_{42}''&d_2&0\\
	\end{bmatrix} G= \underbrace{\begin{bmatrix}
		0&-d_1'&0&0\\
		\col{d_1'}&0& 0&\col{0}\\
		\col{0}&0&0&\col{d_2'}\\
		0&0&-d_2'&0\\
	\end{bmatrix}}_{\eqcolon \Omega^{(2)}}.
\end{align*}}Finally, the similarity transformation by $D \coloneq \mathrm{diag}(1,\mathrm{sign}(d_1'),  1, -\mathrm{sign}(d_2'))$ enforces the non-negativity of the subdiagonal:
{\small
\begin{equation*}
	D\begin{bmatrix}
		0&-d_1'&0&0\\
		d_1'&0& 0&0\\
		0&0&0&d_2'\\
		0&0&-d_2'&0\\
	\end{bmatrix}D=\begin{bmatrix}
		0&-\sigma_1&0&0\\
		\sigma_1&0& 0&0\\
		0&0&0&-\sigma_2\\
		0&0&\sigma_2&0\\
	\end{bmatrix}, \quad \text{with}\ \sigma_1,\sigma_2\geq 0.
\end{equation*}}This concludes Paardekooper's solution to the $4\times 4$ skew-symmetric EVP. Observe that $\sigma_1$ and $\sigma_2$ are the singular values of $\Omega$, $\pm i \sigma_1$ and $\pm i\sigma_2$ are the eigenvalues of $\Omega$. 
\begin{remark}
If $n$ is odd, then either the $3\times 3$ skew-symmetric EVP must be solved (see~\cref{sec:3x3skew}), or a row and a column of zeros can be appended to the matrix to make $n$ even.
\end{remark}

\subsection{Sweeping strategy and pseudo-code}
All Jacobi-like algorithms require a \emph{sweeping strategy}, i.e., an order in which submatrices are diagonalized or block-diagonalized. Sweeping strategies have been extensively studied, in particular for parallel implementations, see, e.g.~\cite{Sameh1971,Brent85,LukPark1989,Shroff1990}. The standard and cheapest sweeping strategy is the \emph{cyclic strategy}~\cite{Forsythe1960TheCJ,Wilkinson1962}, where submatrices are selected in a cyclic order. This strategy is used in the pseudo-codes of this paper.

Finally, convergence is defined by a stopping criterion. Paardekooper’s method enforces convergence toward a block-diagonal matrix with (possibly zero) $2\times 2$ diagonal blocks. In this manuscript, the function ``$\mathrm{offschur}$'' is defined by
\begin{equation*}
	\mathrm{offschur}:\mathbb{R}^{n\times n}\rightarrow\mathbb{R}^+:A \mapsto \sqrt{\sum_{\substack{i=1\\i\ \text{odd}}}^{n-2}\sum_{j=i+2}^n a_{i,j}^2+a_{i+1,j}^2  + a_{j,i}^2+a_{j,i+1}^2}.
\end{equation*}
\cref{alg:paardekooper} gives the pseudo-code of the cyclic Paardekooper method for skew-symmetric matrices. Operations specific to Paardekooper’s method are written in \textcolor{\colorone}{blue} and marked with (i). In this paper, however, our goal is to obtain the RSD of a general normal matrix $A$. Therefore, we use a variant of this algorithm computing the transformations from the skew-symmetric part $\Omega\coloneq \mathrm{skew}(A)$ but applying these transformations to $A$. The sweeps are \emph{implicitly} applied on $\Omega$ by linearity since $\mathrm{skew}(Q^\top AQ) = Q^\top \mathrm{skew}(A)Q$. We term this variant the \emph{Implicit Paardekooper's method}. The pseudo-code of \cref{alg:paardekooper} includes this variant in \textcolor{\colortwo}{green} and marked with (ii).
\begin{algorithm}[t]
	\caption{\textcolor{\colorone}{(i) Cyclic Paardekooper's method for skew-symmetric matrices}\\
	\hspace*{2.58cm}\textcolor{\colortwo}{(ii) Implicit cyclic Paardekooper's method}}
	\begin{algorithmic}
		\State \textbf{Input:} \textcolor{\colorone}{(i) $\Omega\in\mathrm{Skew}(n)$} or \textcolor{\colortwo}{(ii) $A\in\mathbb{R}^{n\times n}$} with $n$ even, $Q=I_n$ and $\rho>0$.
		\State \textbf{Output:} $S_\Omega \in\mathbb{R}^{n\times n}$ and $Q\in\mathrm{O}(n)$.
		\While{ \textcolor{\colorone}{(i) $\mathrm{offschur}(\Omega)> \rho \|\Omega\|_\mathrm{F}$} or \textcolor{\colortwo}{(ii) $\mathrm{offschur}(\mathrm{skew}(A))> \rho \|A\|_\mathrm{F}$}}
			\For{$i = 1:2:n-3$}
				\For{$j = i+2:2:n-1$}
					\State Set $\mathtt{l}=\{i,i+1,j,j+1\}$.
					\State \textcolor{\colortwo}{(ii) Compute $\Omega_{\mathtt{l},\mathtt{l}} = \frac12(A_{\mathtt{l},\mathtt{l}} -A_{\mathtt{l},\mathtt{l}}^\top) $.}
					\State Compute the real Schur decomposition $G^\top\Omega_{l\mathtt{l},\mathtt{l}}G=S$ by \cref{sec:4x4skew}.
					\State \textcolor{\colorone}{(i) Update $\Omega_{\mathtt{l},:} \leftarrow  G^\top \Omega_{\mathtt{l},:}$} or \textcolor{\colortwo}{(ii) Update $A_{\mathtt{l},:} \leftarrow  G^\top A_{\mathtt{l},:}$}.
					\State \textcolor{\colorone}{(i) Update $\Omega_{:,\mathtt{l}} \leftarrow \Omega_{:, \mathtt{l}}G$} or   \textcolor{\colortwo}{(ii) Update $A_{:,\mathtt{l}} \leftarrow A_{:, \mathtt{l}}G$}.
					\State Update $Q_{:,\mathtt{l}} \leftarrow Q_{:, \mathtt{l}}G$.
				\EndFor
			\EndFor
			\EndWhile
		\Return \textcolor{\colorone}{(i) $S_\Omega\coloneq \Omega$} or  \textcolor{\colortwo}{(ii) $S_\Omega\coloneq A$} and $Q$.
	\end{algorithmic}
	\label{alg:paardekooper}
\end{algorithm}
\section{Normal matrices: a simple $4\times 4$ example}\label{sec:example} Before giving the details of our method (\cref{alg:normalpaardekooper}), we illustrate its functioning on a simple $4\times 4$ instance. 
Consider the real normal matrix $A$ defined by
\begin{equation*}
	A = \begin{bmatrix}
	1 &1&1&-1\\
	1&1&-1&1\\
	1&-1&-1&-1\\
	1&-1&1&1
	\end{bmatrix},\quad\text{with}\ \Omega= \begin{bmatrix}
	0&0&0&-1\\
	0&0&0&1\\
	0&0&0&-1\\
	1&-1&1&0
	\end{bmatrix}.
\end{equation*}
It can be verified that the four eigenvalues of the matrix $A$ are $\{\pm 2,1\pm i\sqrt{3}\}$. 

\textbf{Step I} of \cref{alg:normalpaardekooper} applies implicitly Paardekooper's method on $A$ until $\Omega$ is block-diagonalized. It can be verified that Paardekooper's method gives the $4\times 4$ rotation
\begin{equation*}
	G = \begin{bmatrix}
	-\frac{\sqrt{2}}{2}&0&\frac{\sqrt{2}}{2}&0\\
	0&0&0&1\\
	-\frac{\sqrt{2}}{2}&0&-\frac{\sqrt{2}}{2}&0\\
	0&-1&0&0
	\end{bmatrix} \begin{bmatrix}
	\frac{\sqrt{6}}{3}&0&0&-\frac{\sqrt{3}}{3}\\
	0&1&0&0\\
	0&0&1&0\\
	\frac{\sqrt{3}}{3}&0&0&\frac{\sqrt{6}}{3}
	\end{bmatrix}.
\end{equation*}
By applying the similarity transformation on the matrix $A$, we obtain the update $A^{(1)}= G^\top A G$ such that
\begin{equation*}
A^{(1)}  = \begin{bmatrix}
	1 &-\sqrt{3}&0&0\\
	\sqrt{3}&1&0&0\\
	0&0&-1&\sqrt{3}\\
	0&0&\sqrt{3}&1
	\end{bmatrix},\quad\text{and}\ \Omega^{(1)}= \begin{bmatrix}
	0&-\sqrt{3}&0&0\\
	\sqrt{3}&0&0&0\\
	0&0&0&0\\
	0&0&0&0
	\end{bmatrix}.
\end{equation*}
Since $n=4$, one sweep completes step I. The eigenvalues $\pm i\sqrt{3}$ of $ \Omega^{(1)}$ are not repeated. Hence, the corresponding block of $A^{(1)}$ gives the eigenvalues $1\pm i\sqrt{3}$ of $A$ directly. The eigenvalue zero of $\Omega^{(1)}$ has multiplicity~$2$, positioned at the diagonal indices $\mathtt{l}_0=\{3,4\}$. Therefore, we isolate the  $2\times 2$ symmetric matrix $A_{\mathtt{l}_0,\mathtt{l}_0}$ containing the real eigenvalues of $A$. 

\textbf{Step II} gives the real eigenvalues of the matrix $A_{\mathtt{l}_0,\mathtt{l}_0}$ by a single Jacobi rotation
\begin{equation*}
\begin{bmatrix}
\frac12 & \frac{\sqrt{3}}{2}\\
-\frac{\sqrt{3}}{2}&\frac12
\end{bmatrix} \begin{bmatrix}
-1&\sqrt{3}\\
\sqrt{3}&1
\end{bmatrix} \begin{bmatrix}
\frac12 & -\frac{\sqrt{3}}{2}\\
\frac{\sqrt{3}}{2}&\frac12
\end{bmatrix} = \begin{bmatrix}
2&0\\
0&-2
\end{bmatrix}.
\end{equation*}
We recover the real eigenvalues $\pm 2$ of the matrix $A$.

\textbf{Step III} only applies with inexact Schur decompositions and/or inexact arithmetic.

We have thus diagonalized the normal matrix $A$ with a few arithmetic operations. In the next section, we generalize the procedure for arbitrary dimensions.
\section{The algorithm}\label{sec:phases}
This section describes~\cref{alg:normalpaardekooper}, a new Jacobi-like algorithm for computing the real Schur form of a real normal matrix $A$. It is divided in three steps identifying the subproblems of~\cref{sec:theorems}, and new subproblems described by the perturbation analysis conducted in~\cref{sec:floating_arithmetic}. The result of a numerical experiment showing the output of each step is illustrated in \cref{fig:four_phases}.
\begin{algorithm}[t]
	\begin{algorithmic}
		\State \textbf{Input:} $A\in\mathbb{R}^{n\times n}$ normal with $n$ even, $Q = I_n$, $\rho>0$.
		\State \textbf{Output:} $S\in\mathbb{R}^{n\times n}$ and $Q\in\mathrm{O}(n)$ such that $\mathrm{offschur}(S)\leq \rho \|A\|_\mathrm{F}$ and $A=QSQ^\top$.
		\State \textbf{step I.1:} Apply \cref{alg:paardekooper} with $(A, Q, \rho)$ as input. The output satisfies $\mathrm{offschur}(\mathrm{skew}(A))\leq \rho \|A\|_\mathrm{F}$.
		\State \textbf{step I.2:} Build the adjacency matrix $A_{\mathrm{adj}}$ with $(A,\rho)$ as input of \cref{alg:Adjacency}.
		\State \textbf{step I.3:} Find the indices $l_i$ of each connected component of $A_{\mathrm{adj}}$ for $i=1,2,...$
		\For{every connected component with indices $l_i$}
			\If{$|l_i|\geq 4$ and $\| \mathrm{offschur}(A_{l_i,l_i}- \mathrm{sskh2}(A_{l_i,l_i}))\|_\mathrm{F} < \sqrt{\rho \|A\|_\mathrm{F}}$ (see \eqref{eq:sskh2})}
				\State  \textbf{step II.1:} Apply \cref{alg:ssh_jacobi} with $(A, Q, l_i, \rho)$ as input. The output satisfies $\mathrm{offdiag}(\mathrm{sskh2}(A_{l_i,l_i}))\leq\rho \|A\|_\mathrm{F}$.
			\ElsIf{$\| \mathrm{skew}(A_{l_i,l_i})\|_\mathrm{F} < \sqrt{\rho \|A\|_\mathrm{F}}$}
				\State \textbf{step II.2:} Apply \cref{alg:implicit_symmetric_jacobi} with  $(A, Q, l_i, \rho)$ as input. The output satisfies $\mathrm{offdiag}(\mathrm{sym}(A_{l_i,l_i}))\leq \rho \|A\|_\mathrm{F}$.
			\Else
				\State  \textbf{step II.3:} Apply~\cref{alg:Zhou} with  $(A, Q, l_i, \sqrt{\rho})$ as input.  Break~iterations if a sweep increases $\mathrm{offschur}(A)$ or if the number of sweeps exceeds $5|l_i|$.
			\EndIf
		\EndFor
		\If{$\mathrm{offschur}(A) > \rho \|A\|_\mathrm{F}$}
			\State \textbf{step III:} Apply \cref{alg:Zhou} with $(A, Q, l=\{1,...,n\},\rho)$ as input.
		\EndIf
		\Return $S\coloneq A$ and $Q$.
	\end{algorithmic}
	\caption{(In-place) Jacobi-like algorithm for real normal matrices}
	\label{alg:normalpaardekooper}
\end{algorithm} 
\begin{itemize}
	\item After step I, the skew-symmetric part is block-diagonalized within a prescribed accuracy $\rho>0$: $\mathrm{offschur}(\mathrm{skew}(A))\leq\rho\|A\|_\mathrm{F}$. The normal matrix $A$ is divided in isolated non-zero blocks. These blocks are automatically identified.
	\item After step II, the normal matrix $A$ is ``roughly'' block-diagonalized, i.e., it satisfies $\mathrm{offschur}(A)\lesssim \sqrt{\rho\|A\|_\mathrm{F}}$. This step is itself divided in substeps, described later in details. The justification for using $\sqrt{\rho\|A\|_\mathrm{F}}$ is the use of a quadratically convergent method at step III.
	\item At step III, a final, computationally cheap, accuracy improvement step block-diagonalizes $A$ within prescribed accuracy: $\mathrm{offschur}(A)\leq\rho\|A\|_\mathrm{F}$.
\end{itemize}
Step III requires another Jacobi-like algorithm for normal matrices (\cref{alg:Zhou}). In theory, it can be any of the methods from~\cite{BunseGerstner1993,ZhouBrent2003,Veselic1979b}, or complex-valued methods such as~\cite{Goldstine59,Eberlein70}. In our practical implementation, we use the algorithm of~\cite{ZhouBrent2003}. For step III to be computationally cheap, $\rho$ should be small enough, e.g., the unit roundoff, to benefit from the quadratic local convergence of \cref{alg:Zhou}. In our experiments, we have observed that this property ensures step~III only requires at most two sweeps (most often one) to achieve the target accuracy $\rho$.

\begin{figure}[ht]
	\centering
	\includegraphics[width = \textwidth]{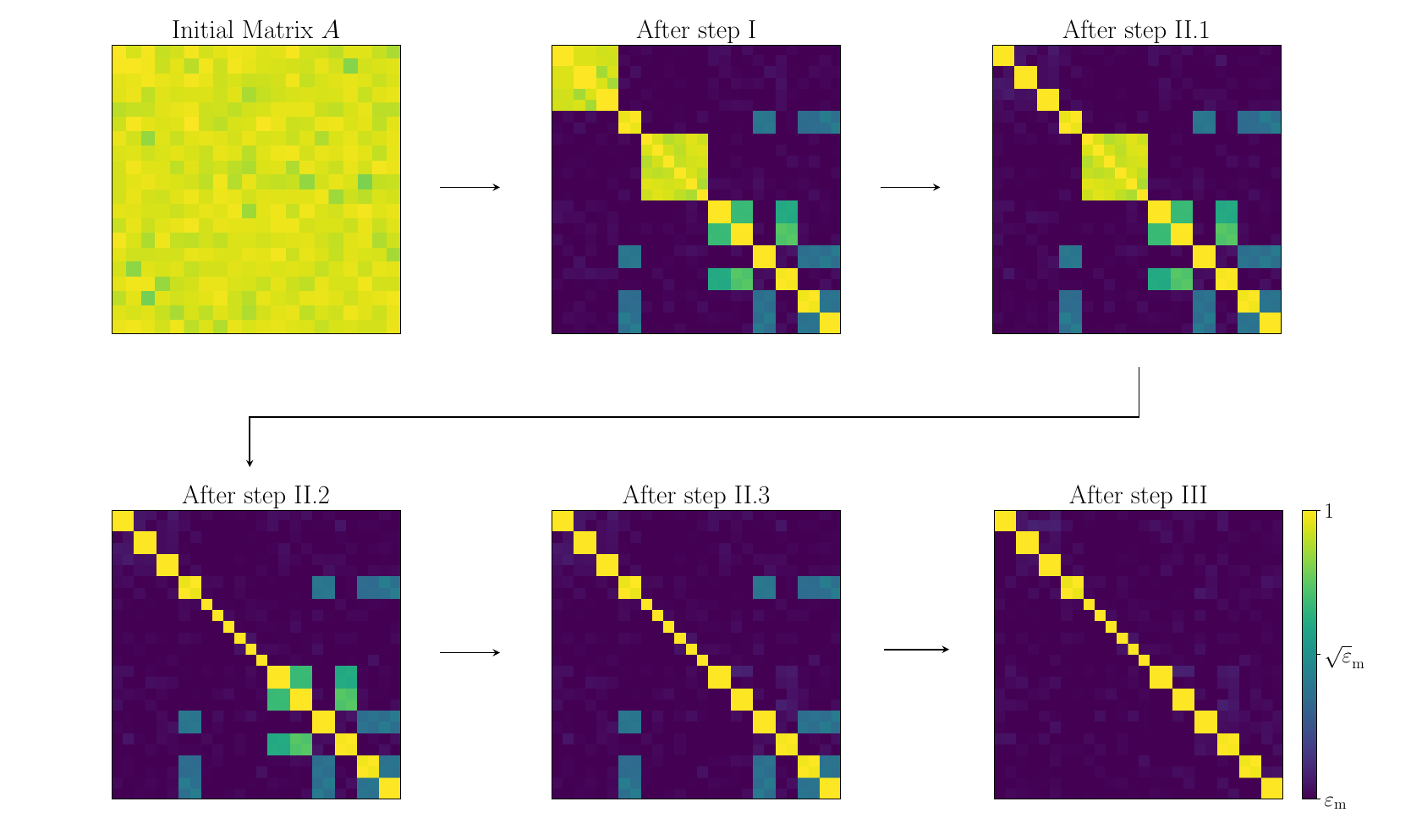}
	\vspace*{-0.6cm}
	\caption{Vizualisation of the steps of \cref{alg:normalpaardekooper} for a numerical experiment with a $26\times 26$ random normal matrix. The matrix is build for triggering all steps: there are 3 pairs of eigenvalues with repeated nonzero imaginary parts for step II.1, 6 real eigenvalues for step II.2, 3 pairs of eigenvalues with close nonzero imaginary parts for step II.3 and, finally, 3 pairs of eigenvalues with imaginary parts separated enough to avoid both step II.1 and step II.3, but close enough to trigger step III. The heatmaps represent the modulus of the matrix elements in logarithmic scale. Yellow is $\mathcal{O}(1)$, green is $\mathcal{O}(\sqrt{\varepsilon_\mathrm{m}})$ and dark blue is $\mathcal{O}(\varepsilon_\mathrm{m})$.}
	\label{fig:four_phases}
	\vspace*{-0.3cm}
\end{figure}
\begin{remark}
In the pseudo-code of \cref{alg:normalpaardekooper} and in this paper, we assume that the matrix size $n$ is even. This assumption is used for symplifying the pseudo-codes but nothing prevents $n$ being odd in practice. 
\end{remark}
\subsection{Step I.1: Implicit Paardekooper's method}\label{sec:phase_I}
\cref{alg:normalpaardekooper} starts by applying Paardekooper's method of \cref{alg:paardekooper} implicitly on $A$ with the stopping criterion $\rho$. Since Paardekooper's sweeps are computed from the entries of~$\Omega\coloneq \mathrm{skew}(A)$, $\Omega$ converges to its Schur form. Assume that after $K$ sweeps, it holds that $\mathrm{offschur}(\Omega) < \rho \|A\|_\mathrm{F}$. Then \cref{alg:paardekooper} stops. We denote this part of the algorithm by \emph{step I.1}. 

\subsection{Steps I.2 and I.3: Detection of unresolved blocks} As illlustrated in \cref{fig:four_phases}, and partly explained by~\cref{sec:theorems}, some off-diagonal blocks of $A$ may not be near zero. For example, by \eqref{eq:evp_real}, if $A$ admits real eigenvalues, a symmetric submatrix remains. By \eqref{eq:evp_complex_repeated}, if $A$ admits eigenvalues with repeated nonzero imaginary parts, submatrices should be symmetric skew-Hamiltonian. Finally, another type of non-near-zero submatrices results from the stopping criterion $\rho$ in Paardekooper's method. The theoretical explanation is given in \cref{sec:floating_arithmetic}.

Step I.2 finds the nonzero submatrices of $A$. To this end, $A$ can be interpreted as the weighted adjacency matrix of a directed graph. We define a matrix $A_{\mathrm{adj}}$ whose entries are either $1$ or $0$ if, respectively, the absolute value of the corresponding entry of $A$ exceeds a prescribed threshold or not. We consider the threshold $\sqrt{\rho\|A\|_\mathrm{F}}$ to ensure that $\mathrm{offschur}(A)\lesssim \sqrt{\rho\|A\|_\mathrm{F}}$ at the end of step II and benefit in practice from the quadratic convergence at step III. By construction, identifying a nonzero submatrix turns into finding a connected component in the graph represented by $A_{\mathrm{adj}}$.

Finding connected components in $A_{\mathrm{adj}}$ is a standard graph problem that can be solved, for example, using Breadth-First Search (BFS) or Depth-First Search (DFS), see, e.g.,~\cite[Chap.~22.2~and~22.3]{CormenLeisersonRivestStein2009}. In BFS, one starts from a node $i$, adds all its neighboring nodes to the current component, and then iteratively includes all unvisited neighbors of those nodes. This procedure continues until no unvisited neighboring node remains, at which point the entire connected component has been identified.

The $2\times 2$ block structure of the Schur form $S$ imposed by Paardekooper's method. Therefore, for $i$ odd, the indices $i$ and $i+1$ should always be considered as connected, or equivalently, as the same node. Moreover, the graph should be undirected, i.e., $A_{\mathrm{adj}}$ should be symmetric. A pseudo-code is given in \cref{alg:Adjacency}.
\begin{algorithm}
	\caption{Adjacency matrix construction}
	\begin{algorithmic}
		\State \textbf{Input:} $A\in\mathbb{R}^{n\times n}$, $n$ even, $\rho>0$.
		\State Define $A_{\mathrm{adj}} = 0_{n\times n}$.
		\For{$i=1:2:n-3$}
			\For{$j=i+2:2:n-1$}
				\State Let $\mathtt{l}_1 = \{i,i+1\}$ and $\mathtt{l}_2=\{j, j+1\}$.
				\If{$\|[A_{\mathtt{l}_1, \mathtt{l}_2}\ | \ A_{\mathtt{l}_2,\mathtt{l}_1}]\|_\mathrm{F}> \sqrt{\rho\|A\|_\mathrm{F}}$}
					\State Set $A_{\mathrm{adj},\mathtt{l}_1\cup \mathtt{l}_2,\mathtt{l}_1\cup \mathtt{l}_2} = \texttt{ones}(4, 4)$.
				\EndIf
			\EndFor
		\EndFor
	    \Return $A_{\mathrm{adj}}$.
	\end{algorithmic}
	\label{alg:Adjacency}
\end{algorithm}

\subsection{Step II.1: Symmetric Skew-Hamiltonian Jacobi} The purpose of step~II.1 is diagonalizing symmetric skew-Hamiltonian submatrices of $A$.

Let $l$ be a list of $2m$ indices obtained from step~I.3 and let $\sigma\geq 0$ be the average singular value of $\Omega_{l,l}$. Let $\widetilde{M}\coloneq A_{l,l}-\sigma (I_m \otimes J_2)$. By \eqref{eq:evp_complex_repeated}, a first possibility is that $\widehat{M}\coloneq P_{\mathrm{eo}}^\top \widetilde{M} P_{\mathrm{eo}}$ is a perturbed symmetric skew-Hamiltonian matrix.
In \cref{thm:nearest_ssh}, we show that letting the partition in $m\times m$ blocks $\widehat{M}=\left[\begin{smallmatrix}\widehat{M}_{11}&\widehat{M}_{12}\\\widehat{M}_{21}&\widehat{M}_{22}\end{smallmatrix}\right]$, the nearest symmetric skew-Hamiltonian matrix is given by
\begin{equation*}
	\argmin_{M\in\mathrm{SSkH}(2m)}\|\widehat{M}-M\|_\mathrm{F} = \mathrm{sskh}(\widehat{M}) \coloneq \frac{1}{2}\begin{bmatrix}
	\mathrm{sym}(\widehat{M}_{11}+\widehat{M}_{22})& -\mathrm{skew}(\widehat{M}_{21}-\widehat{M}_{12})\\
	\mathrm{skew}(\widehat{M}_{21}-\widehat{M}_{12})&\mathrm{sym}(\widehat{M}_{11}+\widehat{M}_{22})
	\end{bmatrix}.
\end{equation*}
Therefore, if it holds that
\begin{equation*}
	\|\widehat{M} - \mathrm{sskh}(\widehat{M})\|_\mathrm{F}\leq \sqrt{\rho \|A\|_\mathrm{F}},
\end{equation*}
then the matrix $\mathrm{sskh}(\widehat{M})$ can be diagonalized using~\cref{alg:ssh_jacobi}, which is an implicit version of the Jacobi-like method introduced in~\cite[Section~5.4]{FABENDER200137}. During~\cref{alg:ssh_jacobi}, the remaining part $\widehat{M} - \mathrm{sskh}(\widehat{M})$ is considered as a noise which is omitted while diagonalizing~$\mathrm{sskh}(\widehat{M})$. The Frobenius norm of the noise is conserved by~\cref{alg:ssh_jacobi}. Hence, it remains small and it is quickly diagonalized at step III using \cref{alg:Zhou}. 

The algorithm presented in~\cite[Section~5.4]{FABENDER200137} applies to $\mathrm{sskh}(\widehat{M})\in\mathrm{SSkH}(2m)$ while~\cref{alg:ssh_jacobi} requires working with the non-permuted form $\widetilde{M}$. Therefore, all operations from~\cite[Section~5.4]{FABENDER200137}, including the submatrices selection and the similarity transformations, have to be carefully permuted. For completeness of this paper, we provide all necessary information to implement~\cref{alg:ssh_jacobi}.
First, we define
\begin{equation}\label{eq:sskh2}
	\mathrm{sskh2}(\widetilde{M})\coloneq P_{\mathrm{eo}}\mathrm{sskh}(P_{\mathrm{eo}}^\top \widetilde{M} P_{\mathrm{eo}})P_{\mathrm{eo}}^\top =P_{\mathrm{eo}}\mathrm{sskh}(\widehat{M})P_{\mathrm{eo}}^\top  .
\end{equation}
Given $i\in\{1,...,2m-3\}$ and $j\in\{i+2,...,2m-1\}$, $i,j$ odd and $\mathtt{l} = \{i,i+1,j,j+1\}$, basic arithmetic operations show that every $4\times4$ submatrix $\mathrm{sskh2}(\widetilde{M})_{\mathtt{l},\mathtt{l}}$ has the form
\begin{equation*}
\mathrm{sskh2}(\widetilde{M})_{\mathtt{l},\mathtt{l}} =\begin{bmatrix}
h_1&0&h_2&\omega\\
0&h_1&-\omega&h_2\\
h_2&-\omega&h_3&0\\
\omega&h_2&0&h_3
\end{bmatrix},\quad  \text{with}\ h_1,h_2,h_3,\omega\in\mathbb{R}.
\end{equation*}
Then, \cite[Section~5.4]{FABENDER200137} shows that defining $p = [-\omega\quad \frac{1}{2}(h_1-h_3)\quad h_2]^\top$, $\alpha = \|p\|_2$ and $\beta = \alpha + p_2$, we have
\begin{equation}\label{eq:ssh_rotation}
 R = \frac{1}{\sqrt{2\alpha\beta}}\begin{bmatrix}
 \beta&0&-p_3&p_1\\
 p_3&p_1&\beta&0\\
 0&\beta&-p_1&-p_3\\
 -p_1&p_3&0&\beta
 \end{bmatrix}\in\mathrm{O}(4).
\end{equation}
The matrix $RP_{\mathrm{eo}}$ is ortho-symplectic and $R$ diagonalizes $\mathrm{sskh2}(\widetilde{M})_{l,l}$ \cite{FABENDER200137}. Moreover,
\begin{align}
\nonumber
	 R^\top (\mathrm{sskh2}(\widetilde{M})_{\mathtt{l},\mathtt{l}} + \sigma I_2\otimes J_2) R& =  R^\top \mathrm{sskh2}(\widetilde{M})_{\mathtt{l},\mathtt{l}} R+ \sigma I_2\otimes J_2 \\
	 \label{eq:diagonalized_ssh2}
	  &= \begin{bmatrix}
 \lambda_1&-\sigma&0&0\\
 \sigma&\lambda_1&0&0\\
 0&0&\lambda_2&-\sigma\\
 0&0&\sigma&\lambda_2
\end{bmatrix}.
\end{align}
Combining \eqref{eq:diagonalized_ssh2} with a sweeping strategy provides a Jacobi-like algorithm for the matrix $\mathrm{sskh2}(\widetilde{M}) +  \sigma (I_m\otimes J_2)$. A pseudo-code is given for the implicit, cyclic procedure in \cref{alg:ssh_jacobi} with $
	\mathrm{offdiag}:\mathbb{R}^{n\times n}\rightarrow \mathbb{R}: A \mapsto \sqrt{\sum_{i=1}^n \sum_{j=i+1}^n a_{i,j}^2 + a_{j, i}^2}$.
\begin{algorithm}
	\caption{Implicit, permuted, cyclic SSH Jacobi-like algorithm}
	\begin{algorithmic}
		\State \textbf{Input:} $A\in\mathbb{R}^{n\times n}$, $Q\in\mathrm{O}(n)$, a list of indices $l\in\mathbb{N}^{2m}_0$, $\rho>0$.
		\While{$\mathrm{offdiag}(\mathrm{sskh2}(A_{l,l})) > \rho \|A\|_\mathrm{F} $}
			\For{$i=1:2:2m-3$}
				\For{$j=i+2:2:2m-1$}
					\State Let $\mathtt{l} = \{l_i,l_{i+1},l_j,l_{j+1}\}$.
					\State Compute $R$ from \eqref{eq:ssh_rotation} for $\mathrm{sskh2}(A_{\mathtt{l},\mathtt{l}})$.
					\State Update $A_{:,\mathtt{l}}\leftarrow A_{:,\mathtt{l}}R$ and $A_{\mathtt{l},:}\leftarrow R^\top A_{\mathtt{l},:}$.
					\State Update $Q_{:,\mathtt{l}}\leftarrow Q_{:,\mathtt{l}}R$.
				\EndFor
			\EndFor
		\EndWhile
		\Return $A$ and $Q$.
	\end{algorithmic}
	\label{alg:ssh_jacobi}
\end{algorithm}
\vspace*{-0.3cm}
\subsection{Step II.2: Implicit Symmetric Jacobi} The purpose of step~II.2 is diagonalizing the symmetric submatrices of~$A$. Let $l$ be a list of $r$ indices obtained from step~I.3 and $\widehat{H} \coloneq A_{l,l}$. Moreover, assume $\|\mathrm{skew}(\widehat{H})\|_\mathrm{F}<\sqrt{\rho\|A\|_\mathrm{F}}$. Then, $\mathrm{skew}(\widehat{H})$ can be considered as a perturbation of the symmetric matrix $\mathrm{sym}(\widehat{H})$. The perturbation error will be diagonalized at step III. At step II.2, $\mathrm{sym}(\widehat{H})$ is diagonalized implicitly using Jacobi's algorithm for symmetric matrices on the rows and columns of~$A$. The pseudo-code is given in~\cref{alg:implicit_symmetric_jacobi}. Again, for completeness, we recall that for $h_{11},h_{12},h_{22}\in\mathbb{R}$, a Jacobi rotation yields
\begin{equation*}
\begin{bmatrix}
	c&s\\
	-s&c
	\end{bmatrix}
	\begin{bmatrix}
	h_{11}&h_{12}\\
	h_{12}&h_{22}
	\end{bmatrix}\begin{bmatrix}
	c&-s\\
	s&c
	\end{bmatrix} =\begin{bmatrix}
	\lambda_1&0\\
	0& \lambda_2
	\end{bmatrix}\quad \text{with}\quad \lambda_1,\lambda_2\in\mathbb{R}.
\end{equation*}
If $h_{12}\neq 0$, the Jacobi rotation is computed by 
\begin{equation*}
	\kappa = \frac{h_{11}-h_{22}}{2h_{12}},\quad t =\frac{\mathrm{sign}(\kappa)}{|\kappa| + \sqrt{1+\kappa^2}},\quad
	c = \frac{1}{\sqrt{1+t^2}}\quad\text{and,}\quad s = ct.
\end{equation*}
\begin{algorithm}[t]
 	\caption{Implicit cyclic symmetric Jacobi's algorithm}
	\begin{algorithmic}
		\State \textbf{Input:} $A\in \mathbb{R}^{n\times n}$, $Q\in\mathrm{O}(n)$, a list $l\in\mathbb{N}^r_0$ and $\rho>0$.
		\While{$\mathrm{offdiag}(\mathrm{sym}(A_{l,l}))>\rho \|A\|_\mathrm{F}$}
		\For{$i=1:r$}
			\For{$j\in i+1:r$}
				\State Let $\mathtt{l}=\{l_i,l_j\}$.
				\State Compute the Jacobi rotation $R$ diagonalizing $H\coloneq \mathrm{sym}(A_{\mathtt{l},\mathtt{l}})$.
				\State Update $A_{:,\mathtt{l}}\leftarrow A_{:,\mathtt{l}} R$ and  $A_{l,:}\leftarrow R^\top A_{\mathtt{l},:}$.
				\State Update $Q_{:,\mathtt{l}}\leftarrow Q_{:,\mathtt{l}} R$.
			\EndFor
		\EndFor
		\EndWhile
		\Return $A$ and $Q$.
	\end{algorithmic}
	\label{alg:implicit_symmetric_jacobi}
\end{algorithm}
\vspace*{-0.3cm}

\subsection{Step III: Accuracy refinement} Steps I and II led to the point where $\mathrm{offschur}(A)$ has the worst-case magnitude of the order of $\sqrt{\rho\|A\|_\mathrm{F}}$, either because small nonzero entries have been neglected by the cluster identification procedure of step I.3 , or because they have been considered as perturbations at step II. Since only orthogonal similarity transformations have been applied to $A$, the matrix remains normal at the end of step~II. One or two sweeps of \cref{alg:Zhou} lead to the block-diagonalization of $A$ with the condition $\mathrm{offschur}(A)\leq\rho\|A\|_\mathrm{F}$. 

\subsection{Convergence of the algorithm} We conclude this section by showing the global convergence of \cref{alg:normalpaardekooper} in \cref{thm:convergence}. Importantly, we assume~\cref{alg:Zhou} converges globally for normal matrices. This assumption is validated by available algorithms for which it is proven \cite[Thm.~9.1]{Goldstine59}, or conjectured \cite{BunseGerstner1993,ZhouBrent2003}.
\begin{theorem}\label{thm:convergence}
Assume~\cref{alg:Zhou} is a Jacobi-like algorithm that always returns for normal matrices in exact arithmetic. Then, \cref{alg:normalpaardekooper} always returns for normal matrices in exact arithmetic and its output $(S,Q)$ satisfies $A = QSQ^\top$ with $\mathrm{offschur}(S)\leq\rho\|A\|_\mathrm{F}$.
\end{theorem}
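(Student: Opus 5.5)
The proof walks through \cref{alg:normalpaardekooper} step by step. At each step we check two things: the step terminates in exact arithmetic, and the invariant $A_{\mathrm{in}} = Q A Q^\top$ is preserved, where $A$ denotes the current in-place iterate. The invariant is immediate. Every update has the form $A_{:,\mathtt{l}}\leftarrow A_{:,\mathtt{l}}G$, $A_{\mathtt{l},:}\leftarrow G^\top A_{\mathtt{l},:}$, $Q_{:,\mathtt{l}}\leftarrow Q_{:,\mathtt{l}}G$ with $G$ orthogonal: a $4\times4$ Paardekooper rotation, $R$ from \eqref{eq:ssh_rotation}, or a Jacobi rotation. Such an update replaces $A$ by $\widehat{G}^\top A\widehat{G}$ and $Q$ by $Q\widehat{G}$, with $\widehat{G}$ the embedding of $G$ in $\mathrm{O}(n)$. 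Hence $QAQ^\top$ is unchanged. The same argument shows that $A$ stays normal and that $\|A\|_\mathrm{F}$ is invariant, so all thresholds $\rho\|A\|_\mathrm{F}$ are fixed constants.

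\textbf{Termination of steps I, II and III.}
\begin{itemize}
\item \emph{Step I.1.} The rotations are computed from $\mathrm{skew}(A)$ and, by linearity, $\mathrm{skew}(\widehat G^\top A\widehat G)=\widehat G^\top\mathrm{skew}(A)\widehat G$. The sequence of skew-symmetric parts is therefore exactly the sequence produced by the explicit cyclic Paardekooper method on $\Omega=\mathrm{skew}(A)$. By the global convergence of Paardekooper's method \cite{Paardekooper1971}, $\mathrm{offschur}(\mathrm{skew}(A))\to 0$. Since $\rho\|A\|_\mathrm{F}>0$ (assume $A\neq0$; otherwise the algorithm returns immediately), the while-loop exits after finitely many sweeps.
\item \emph{Steps I.2 and I.3.} These are finite graph computations: \cref{alg:Adjacency} followed by BFS/DFS.
\item \emph{Step II.1.} We use the global convergence of the Jacobi-like method for $\mathrm{SSkH}$ matrices \cite[Section~5.4]{FABENDER200137}. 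We must check that the implicit permuted version acts on $\mathrm{sskh2}(A_{l,l})$ as the original method acts on $\mathrm{sskh}(\widehat M)$. This requires $\mathrm{sskh2}(R^\top A_{l,l}R)=R^\top\mathrm{sskh2}(A_{l,l})R$ for the embedded $R$. It should follow from $RP_{\mathrm{eo}}$ being ortho-symplectic and from the projection $\mathrm{sskh}$ being equivariant under ortho-symplectic similarities. The equivariance holds because $\mathrm{SSkH}$ and its orthogonal complement are both invariant under such similarities, and the Frobenius inner product is preserved. The shift by $\sigma(I_m\otimes J_2)$ commutes with these transformations, so it plays no role.
\item \emph{Step II.2.} The symmetric case is analogous and easier. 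Since $\mathrm{sym}(R^\top A R)=R^\top\mathrm{sym}(A)R$, the iteration is exactly cyclic Jacobi on $\mathrm{sym}(A_{l,l})$, whose convergence is classical \cite{Forsythe1960TheCJ}. Rotations with indices in $l$ act only on rows and columns in $l$, so the loop exits.
\item \emph{Step II.3.} This step terminates by construction: it performs at most $5|l_i|$ sweeps of a finite sweep procedure, and the loop over components is finite.
\item \emph{Step III.} It is reached only if $\mathrm{offschur}(A)>\rho\|A\|_\mathrm{F}$, and its input is normal by the invariance above. By the hypothesis on \cref{alg:Zhou}, it returns with $\mathrm{offschur}(A)\leq\rho\|A\|_\mathrm{F}$. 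If step III is skipped, the inequality holds by the if-test. Either way the output $S$ satisfies the bound and $A_{\mathrm{in}}=QSQ^\top$.
\end{itemize}

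\textbf{Main obstacle.} The delicate part is step II.1. We must justify that the permuted, implicit iteration really is the convergent method of \cite{FABENDER200137} applied to $\mathrm{sskh}(\widehat M)$. This rests on the equivariance of $\mathrm{sskh}$ under ortho-symplectic similarities, which I would prove first as a short lemma. We also need that the cited convergence is global in exact arithmetic. If only local convergence were available there, I would fall back on a monotonicity argument: each $4\times4$ step zeroes a block without increasing the off-diagonal mass, as in the classical Jacobi proof. This requires care to bound how the sweep order affects the limit. I would also clarify the stopping test of \cref{alg:implicit_symmetric_jacobi}, which depends only on $A_{l,l}$.
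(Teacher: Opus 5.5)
Your proposal is correct and follows the paper's proof essentially step for step. It uses global convergence of Paardekooper's method via $\mathrm{skew}(\widehat G^\top A\widehat G)=\widehat G^\top\mathrm{skew}(A)\widehat G$, finiteness of steps I.2--I.3 and II.3, reduction of II.1 and II.2 to cyclic SSkH (Hermitian) and symmetric Jacobi, and the hypothesis on \cref{alg:Zhou} for step III. The paper gets global convergence of II.1 from the identification \eqref{eq:ssh_hermitian} with Hermitian Jacobi, so your fallback argument is unnecessary. You also make explicit the invariant $A_{\mathrm{in}}=QAQ^\top$, the preserved normality and Frobenius norm, and the final $\mathrm{offschur}$ bound, which the paper leaves implicit.

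One small correction to your step II.1 lemma. The equivariance $\mathrm{sskh2}(R^\top XR)=R^\top\mathrm{sskh2}(X)R$ follows from $P_{\mathrm{eo}}^\top RP_{\mathrm{eo}}\in\mathrm{OSp}(2m)$, i.e., $R$ orthogonal and commuting with $I_m\otimes J_2$; this is also the property \eqref{eq:diagonalized_ssh2} uses to move the shift through. The condition you cite, $RP_{\mathrm{eo}}\in\mathrm{OSp}(2m)$, instead gives $\mathrm{sskh2}(R^\top XR)=R^\top\mathrm{sskh}(X)R$.
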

\begin{proof}
	First, step I.1 always returns. Indeed, the implicit sweeps of \cref{alg:paardekooper} with $A$ as input are identical to Paardekooper's sweeps with $\mathrm{skew}(A)$ as input. Since, Paardekooper's method converges globally for skew-symmetric matrices~\cite{Paardekooper1971}, step I.1 stops after finitely many sweeps.
	
	Steps I.2 and I.3 require $\mathcal{O}(n^2)$ arithmetic operations. They are finite by definition.
	
	If step II.1 is applied, then the sweeps of \cref{alg:ssh_jacobi} with $A_{l_i,l_i}$ as input are equivalent to sweeps of~\cite{FABENDER200137} with $P_{\mathrm{eo}}^\top \mathrm{ssh2}(A_{l_i,l_i})P_{\mathrm{eo}}$ as input.\footnote{Jacobi's algorithm for SSkH matrices is exactly equivalent to Jacobi's algorithm for Hermitian matrices of half the size by the identification \eqref{eq:ssh_hermitian}.} Therefore, it converges globally  and stops after finitely many sweeps.
	
	If step II.2 is applied, then the iterations of \cref{alg:implicit_symmetric_jacobi} are equivalent to iterations of symmetric Jacobi with $\mathrm{sym}(A_{l_i,l_i})$ as input. Therefore, it converges globally and stops after finitely many sweeps.
	
	If step II.3 is applied, then~\cref{alg:Zhou} is not ensured to converge because $A_{l_i,l_i}$ is not normal. By adding a finite time stopping condition, we ensure step II.3 stops in finitely many sweeps.
	
	Finally, at step III, $A$ is normal, thus \cref{alg:Zhou} converges by assumption. 
\end{proof}

\section{Perturbation analysis} \label{sec:floating_arithmetic} This section presents a perturbation analysis yielding analogs of the results of \cref{sec:theorems} that take into account the effect of inexactness. It quantifies how the matrix structures, such as symmetry in~\eqref{eq:evp_real}, are perturbed in practice. This section explains the behavior and justifies the design of~\cref{alg:normalpaardekooper}. More precisely, by means of perturbation bounds, it identifies matrices that may trigger the different steps of the algorithm. Indeed, the results of~\cref{sec:theorems} hold if and only if the real Schur decomposition of $\Omega$ is known exactly. In practice, the inexactness comes from (i) floating-point arithmetic and (ii) the stopping criteria in~\cref{alg:normalpaardekooper}. If the stopping criteria are not smaller than machine precision, we claim that their influence dominates that of floating-point arithmetic, such that the numerical behavior of~\cref{alg:normalpaardekooper} can be anticipated by considering \emph{exact} arithmetic. This claim is supported by the following facts.
\begin{enumerate}
	\item All matrix transformations involve orthogonal matrices, in particular, Givens rotations. These are well-known to be backward stable: given a matrix $\widehat{Q}$ obtained from the floating-point product of $s$ sets of $N$ disjoint Givens rotations and $Q$ the exact product, $\|Q-\widehat{Q}\|_\mathrm{F}\leq \varepsilon_\mathrm{m}f(s)$ where $f(s)$ is a moderately growing function of $s$ \cite[Lem.~19.9]{Higham2002} and $\varepsilon_\mathrm{m}$ is the unit roundoff..
	\item The effect of floating-point arithmetic is captured by the stopping criterion. Indeed, given a stopping criterion $\widehat{\rho} = \varepsilon_\mathrm{m} $ in floating-point arithmetic, there is an effective stopping criterion $\rho$ of equivalent magnitude in exact arithmetic. Indeed, assume the approximate RSD $\widehat{Q}^\top A\widehat{Q} - \widehat{S} =  \widehat{E}$ with $\| \widehat{E}\|_\mathrm{F} \leq   \varepsilon_\mathrm{m}\|A\|_\mathrm{F} $. Let $S$ be an exact Schur form of $A$ that is nearest to $\widehat{S}$ and assume $\widehat{Q}$ is approximately orthogonal such that $ \widehat{Q} = Q+ E_Q$ for $Q\in\mathrm{O}(n)$ with $\|E_Q\|_\mathrm{F}\lesssim \varepsilon_\mathrm{m}$, then
	\begin{align*}
		&\widehat{Q}^\top A\widehat{Q} - \widehat{S} =  \widehat{E}\\
		\iff &(Q + E_Q)^\top A (Q + E_Q) - \widehat{S}= \widehat{E}\\
		\iff& Q^\top A Q - S = E\eqcolon  \widehat{E} - E_Q^\top AQ - Q^\top A E_Q -E_Q^\top A E_Q+ (\widehat{S} - S).
	\end{align*}	 
	This yields an effective error bound $\|E\|_\mathrm{F}\leq \varepsilon_\mathrm{m} \tau\|A\|_\mathrm{F}$ with $\tau\geq 1$ for exact arithmetic. 
\end{enumerate}
Therefore, this section assumes that operations are performed in exact arithmetic with inexactness only induced by the stopping criteria.

Consider the output of step I.1 where the stopping criterion $\rho = \varepsilon_\mathrm{m}$ is used. Then, by~\cite[Lem.~4.3]{mataignegallivan2025}, step I.1 returns $Q\in\mathrm{O}(n)$ such that
\begin{equation}\label{eq:skew_form}
	\Omega Q -Q\mathrm{skew}(S) = E,\ \text{where}\ \| E\|_\mathrm{F}\leq \varepsilon_\mathrm{m}\tau \|A\|_\mathrm{F}\text{ and $S$ is a Schur form of $A$.}
\end{equation}
with $\tau\geq 1$. For the rest of this section, the short-hand notation $\cos(\theta_i)\eqcolon c_i$ and $\sin(\theta_i)\eqcolon s_i$ is used.
\subsection{Eigenvalues with distinct nonzero imaginary parts} Assume first that all eigenvalues of $A$ have distinct nonzero imaginary parts. The eigenvalues are denoted by $\{\lambda_j e^{\pm i\theta_j}\}_{j=1}^p$. Then, if and only if~\eqref{eq:skew_form} holds, it is shown in \cite[Thm.~4.4]{mataignegallivan2025} that
\begin{equation}\label{eq:error}
	\|AQ-QS\|_\mathrm{F} \leq \tau \varepsilon_\mathrm{m} \|A\|_\mathrm{F} \left(1+\max_{\substack{i,j =1,...,p\\ i\neq j}}\left|\frac{\lambda_ic_i - \lambda_jc_j}{\lambda_is_i - \lambda_js_j} \right|\right).
\end{equation}
By \eqref{eq:error}, small distances between the imaginary parts of the eigenvalues of $A$ result in non-negligible off-diagonal blocks in $A$ after step I.1. However, unless these off-diagonal blocks are large enough, they are ignored by the clustering procedure of step~I.2, and thus not considered at step~II. Step III is designed for solving the loss of accuracy induced by \eqref{eq:error}.
A similar result can be obtained for eigenvalues with repeated nonzero imaginary parts and real eigenvalues.
\subsection{Eigenvalues with repeated nonzero imaginary parts}\label{sec:complex_repeated_inexact} Assume there is multiplicity in the imaginary parts of the eigenvalues such that for all $j\in [j]\subseteq \{1,...,p\}$, we have $\lambda_js_j = \sigma$. Let  $m\coloneq |[j]|$ and $[i]\coloneq \{1,...,p\}\setminus [j]$. Then,  by \eqref{eq:skew_form}, there is subset $V\in\mathrm{St}(n,2m)$ of the columns of $Q$ such that $\Omega V - V(J_2\otimes \sigma I_m) = E'$ with $\|E'\|_\mathrm{F}\leq \tau \varepsilon_\mathrm{m}\|A\|_\mathrm{F}$. By \cref{thm:accuracy_repeated_improved} given in appendix, we have
\begin{align}
\label{eq:bound_thm_repeated}
	&\min_{R\in\mathrm{OSp}(2m)}\|A(VR)-(VR)\left[\begin{smallmatrix}
		D_m&-\sigma I_m\\
		\sigma I_m&D_m
		\end{smallmatrix}\right]\|_\mathrm{F} \\
		\nonumber
		&\leq\tau \varepsilon_\mathrm{m} \|A\|_\mathrm{F} \left(1 + \frac{\max\limits_{i\in[i],j\in[j]}|\lambda_i c_i-\lambda_j c_j|}{\min\limits_{i\in[i]}|\lambda_i s_i-\sigma|}+ \frac{\max\limits_{j_1,j_2\in[j]}|\lambda_{j_1} c_{j_1}-\lambda_{j_2} c_{j_2}|}{\sigma}\right)+\mathrm{o}(\varepsilon_\mathrm{m}),
\end{align}
where $D_m$ is an $m\times m $ diagonal matrix containing the real parts of the eigenvalues. Thus, after step I.1, there is a $2m\times 2m$ submatrix $\widehat{M}=V^\top  A V - \sigma J_2\otimes I_m $ and
\begin{align*}
	\min_{M\in\mathrm{SSkH}(2m)}\|\widehat{M}-M\|_\mathrm{F}&\leq\min_{R\in\mathrm{OSp}(2m)}\|\widehat{M} - R\left[\begin{smallmatrix}
		D_m&0\\
		0&D_m
		\end{smallmatrix}\right] R^\top\|_\mathrm{F}\\
		&=\min_{R\in\mathrm{OSp}(2m)}\|\widehat{M} + \sigma J_2\otimes I_m - R\left[\begin{smallmatrix}
		D_m&-\sigma I_m\\
		\sigma I_m&D_m
		\end{smallmatrix}\right] R^\top\|_\mathrm{F}\\
		&\leq \min_{\breve{R}\in\mathrm{OSp}(2m)}\|AV - V (R\left[\begin{smallmatrix}
		D_m&-\sigma I_m\\
		\sigma I_m&D_m
		\end{smallmatrix}\right] R^\top)\|_\mathrm{F}.
\end{align*}
Combining the latter inequality with \eqref{eq:bound_thm_repeated}, we obtain
\begin{align}
\label{eq:dist_to_ssh}
&\|\widehat{M}-\mathrm{sskh}(\widehat{M})\|_\mathrm{F}\\
\nonumber
&\leq\varepsilon_\mathrm{m} \tau \|A\|_\mathrm{F}\left(1 + \frac{\max\limits_{i\in\mathcal{I},j\in[j]}|\lambda_i c_i-\lambda_j c_j|}{\min\limits_{i\in\mathcal{I}}|\lambda_i s_i-\sigma|}+\frac{\max\limits_{j_1,j_2\in[j]}|\lambda_{j_1} c_{j_1}-\lambda_{j_2} c_{j_2}|}{\sigma}\right)+ \mathrm{o}(\varepsilon_\mathrm{m}).
\end{align}
In conclusion, $\widehat{M} = \mathrm{sskh}(\widehat{M}) + \Delta\widehat{M}$ where the error bound on $\Delta\widehat{M}$ is amplified, either by eigenvalues with imaginary parts close to $\sigma$ or if $\sigma$ is very small. The consequence for \cref{alg:normalpaardekooper} is that the specialized Jacobi algorithm for SSkH matrices of step~II.1 is only triggered if $\Delta\widehat{M}$ is small enough. Otherwise, the method is redirected to step~II.3 and then to step~III.

\subsection{Real eigenvalues} Finally, assume the matrix $A$ has $r$ real eigenvalues and define the sets $[j]\coloneq\{1,...,p\}$ and $[r]\coloneq\{1,...,r\}$. Then, by \eqref{eq:skew_form}, there is a subset $V\in\mathrm{St}(n,r)$ of columns of $Q$ such that $\Omega V=E''$ with $\|E''\|_\mathrm{F}\leq \tau \varepsilon_\mathrm{m} \|A\|_\mathrm{F}$. By \cite[Thm.~4.7]{mataignegallivan2025}, it holds that
\begin{equation}\label{eq:real_perturbation}
	\min_{\breve{R}\in\mathrm{O}(r)}\|A(V\breve{R}) - (V \breve{R})\breve{\Lambda} \|_\mathrm{F}\leq \tau \varepsilon_\mathrm{m} \|A\|_\mathrm{F} \left(1 + \frac{\max\limits_{j\in[j], \ k\in[r]}|\lambda_j c_j -\breve{\lambda}_k|}{\min\limits_{j=1,....,p}|\lambda_j s_j|}\right) + \mathrm{o}(\varepsilon_\mathrm{m}).
\end{equation}
Similarly, the error bound on the real eigenspace is amplified by complex eigenvalues with small imaginary parts. The result from \eqref{eq:real_perturbation} is the perturbed analog of \eqref{eq:evp_real}. 

At the end of step I.1, an $r\times r$ submatrix of $A$ is equal to  $\widehat{H}=V^\top  A V$. This submatrix $\widehat{H}$ satisfies
\begin{equation*}
	\min_{H\in\mathrm{Sym}(r)}\|\widehat{H}-H\|_\mathrm{F}\leq\min_{\breve{R}\in\mathrm{O}(r)}\|\widehat{H} - \breve{R}\breve{\Lambda} \breve{R}^\top\|_\mathrm{F}\leq \min_{\breve{R}\in\mathrm{O}(r)}\|AV - V (\breve{R}\breve{\Lambda} \breve{R}^\top)\|_\mathrm{F}.
\end{equation*}
Moreover, it is well known that $ \argmin_{H\in\mathrm{Sym}(r)}\|\widehat{H}-H\|_\mathrm{F}=\mathrm{sym}(\widehat{H})$. Therefore, by~\eqref{eq:real_perturbation}, it follows that 
\begin{equation}\label{eq:dist_to_symmetric}
	\|\widehat{H} - \mathrm{sym}(\widehat{H})\|_\mathrm{F} \leq \tau  \varepsilon_\mathrm{m} \|A\|_\mathrm{F} \left(1 + \frac{\max\limits_{j\in[j], k\in[r]}|\lambda_j c_j -\breve{\lambda}_k|}{\min\limits_{j\in [j]}|\lambda_j s_j|}\right) + \mathrm{o}(\varepsilon_\mathrm{m}).
\end{equation}
In conclusion, $\widehat{H} = \mathrm{sym}(\widehat{H}) + \Delta \widehat{H}$ where the error bound on $\Delta \widehat{H}=\mathrm{skew}(\widehat{H})$ is amplified by eigenvalues with small imaginary parts. If these imaginary parts are large enough, then $\Delta\widehat{H}$ is small enough for \cref{alg:normalpaardekooper} to trigger step II.2. Otherwise, the real eigenvalues are approximated at step II.3 and refined at step III.


\section{Numerical experiments}\label{sec:numerical_experiments}
This section presents numerical experiments regarding the accuracy and the running time of \cref{alg:normalpaardekooper} for various distributions of normal matrices. The codes to reproduce all experiments are available at the address \url{https://github.com/smataigne/NormalJacobi.jl}. We compare the performance of \cref{alg:normalpaardekooper} with other Jacobi-like algorithms for solving the normal eigenvalue problem. The considered methods are:
\begin{enumerate}
	\item Our method: \cref{alg:normalpaardekooper}.
	\item Algorithm~2 from Bunse-Gerstner et al.~\cite{BunseGerstner1993}.\footnote{\cite[Alg.~2]{BunseGerstner1993} does not provide an explicit and unique implementation of the algorithm. Therefore, we choose to implement the one suggested by~\cite[Thm.~4.2]{BunseGerstner1993}.}
	\item The norm-reducing Jacobi procedure of Goldstine and Horwitz~\cite{Goldstine59}.
	\item The Jacobi-like algorithm of Zhou and Brent~\cite{ZhouBrent2003} where the subroutine for the $4\times 4$ real Schur decomposition is the \texttt{lapack} function \href{https://www.netlib.org/lapack/explore-html/d5/d38/group__gees_gab48df0b5c60d7961190d868087f485bc.html}{\texttt{gees}}.
	\item A Jacobi-like implementation of the method \texttt{RandDiag} from He and Kressner~\cite{He2025}. \texttt{RandDiag} is described for a more general context than Jacobi-like algorithms, but it can easily be adapted for this framework, as done in~\cref{sec:randdiag}.
\end{enumerate}
\subsection{Numerical accuracy} Let us first compare the numerical accuracy. The algorithms are stopped either if the stopping criterion $\mathrm{offschur}(A) < 10\varepsilon_\mathrm{m}\|A\|_\mathrm{F}$ ($\rho = 10 \varepsilon_\mathrm{m}$) is satisfied or if a sweep doesn't decrease  $\mathrm{offschur}(A)$.

In~\cref{tab:accuracy}, the numerical results for five distributions of normal matrices are reported. Random normal matrices are generated from the real Schur decomposition
\begin{equation*}
	A  = Q\begin{bmatrix}
	\Lambda\cos(\Theta) \otimes I_2+ \Lambda\sin(\Theta)\otimes J_2&0\\
	0&\breve{\Lambda}
	\end{bmatrix}Q^\top,
\end{equation*} 
by specifying the distribution of the Schur vectors $Q$ and that of the eigenvalues. Let $\mathcal{N}(\mu,\sigma^2)$ denote the normal distribution centered in $\mu\in\mathbb{R}$ with standard deviation $\sigma>0$ and $\mathcal{U}(a,b)$ denote the uniform distribution between $a(\leq b)$ and $b$. The five distributions of test matrices for the experiments are:
\begin{itemize}
	\item[Exp1.] Haar-distributed orthogonal matrices~\cite{Anderson87,Stewart80}, i.e., ``uniformly'' distributed in $\mathrm{O}(n)$ with eigenvalue phases uniformly distributed in $(-\pi,\pi]$.
	\item[Exp2.] Random normal matrices with complex eigenvalues. The Schur vectors are Haar-distributed. The eigenvalues have phases sampled from $\mathcal{U}(0,2\pi)$ and radii sampled from  $\mathcal{U}(0,2)$.
	\item[Exp3.] Same as Exp2 with $30\%$ real eigenvalues. The real eigenvalues are sampled from a standard normal distribution $\mathcal{N}(0,1)$.
	\item[Exp4.] Same as Exp2 with $30\%$ eigenvalues with repeated nonzero imaginary parts. The real parts of the repeated eigenvalues are sampled from a standard normal distribution $\mathcal{N}(0,1)$. The repeated imaginary part is also sampled from this distribution.
	\item[Exp5.] Same as Exp2 with phases sampled from $\pi\sqrt{\varepsilon_\mathrm{m}}\mathcal{N}(1,1)$.
\end{itemize}
We can observe from the results of \cref{tab:accuracy} that \cref{alg:normalpaardekooper} always achieves the target accuracy, similarly to all other algorithms but one (\texttt{RandDiag}). Indeed, as shown in~\cite[Cor.~3]{He2025}, the accuracy of \texttt{RandDiag} deteriorates probabilistically as the matrix-size grows.
\begin{table}[ht]
	\centering
    \begin{tabular}{||c||c|c|c|c|c||}
    \hline
    $n$&\multicolumn{5}{c||}{$\mathrm{offschur}(A)\ /\ \|A\|_\mathrm{F}$}\\
    \hline
	& \cref{alg:normalpaardekooper}& B.-G.~et~al. &Goldstine et al.& Zhou et al.& He et al. \\
	\hline
	\multicolumn{6}{||l||}{Exp1. Haar-distributed orthogonal matrices.}\\
	\hline
	64&1.2e-15&3.6e-16&1.4e-15&1.8e-16&1.4e-12\\ 
128&1.6e-15&5.5e-16&2.2e-15&2.3e-16&3.9e-12\\
256&2.1e-15&8.7e-16&3.3e-15&3.9e-16&6.3e-12\\
512&3.0e-15&1.1e-15&5.0e-15&4.3e-16&3.1e-11\\
	\hline
	\multicolumn{6}{||l||}{Exp2. Random normal matrices with complex eigenvalues.}\\
	\hline
64&1.4e-15&5.2e-16&1.0e-15&4.8e-16&1.9e-13\\
128&2.3e-15&3.8e-16&1.5e-15&4.2e-16&7.2e-13\\
256&3.1e-15&5.7e-16&2.2e-15&4.7e-16&5.9e-12\\
512&4.5e-15&9.3e-16&3.3e-15&7.6e-16&1.3e-11\\
\hline
\multicolumn{6}{||l||}{Exp3. Same as Exp2 with $30\%$ real eigenvalues.}\\
\hline
64&1.6e-15&3.3e-16&1.1e-15&4.0e-16&1.7e-13\\
128&2.2e-15&4.8e-16&1.4e-15&1.9e-13&7.4e-13\\
256&3.7e-15&7.7e-16&2.0e-15&7.5e-16&4.6e-12\\
512&5.1e-15&1.2e-15&2.9e-15&1.2e-15&1.2e-11\\
\hline
\multicolumn{6}{||l||}{Exp4. Same as Exp2 with $30\%$ eig.~with repeated nonzero imaginary parts.}\\
\hline
64&1.5e-15&2.2e-16&9.5e-16&2.8e-16&2.9e-13\\
128&2.6e-15&3.6e-16&1.4e-15&3.8e-16&7.9e-13\\
256&3.4e-15&5.3e-16&2.3e-15&4.3e-16&3.7e-12\\
512&4.7e-15&8.7e-16&3.5e-15&7.3e-16&2.6e-11\\
\hline
\multicolumn{6}{||l||}{Exp5. Same as Exp2 with phases sampled from $\pi\sqrt{\varepsilon_\mathrm{m}}\mathcal{N}(1,1)$.}\\
\hline
64&5.8e-16&3.5e-16&1.5e-15&5.3e-16&7.9e-15\\
128&7.8e-16&5.4e-16&2.1e-15&5.9e-16&8.2e-15\\
256&1.0e-15&1.0e-15&2.6e-15&6.6e-16&9.0e-15\\
512&1.3e-15&1.2e-15&3.5e-15&8.6e-16&1.1e-14\\
	\hline
    \end{tabular}
	\caption{Numerical experiments on the accuracy in double precision of \cref{alg:normalpaardekooper} w.r.t.~the other Jacobi-like algorithms. For distributions of random matrices $A$ specified in \cref{sec:numerical_experiments}, the table shows the output result $\mathrm{offschur}(A)/\|A\|_\mathrm{F}$. We vary the size of the test matrices for $n\in\{64,128,256,512\}$. Every reported accuracy is computed by a geometric mean on $10$ runs.}
	\label{tab:accuracy}
	
	\vspace*{-0.6cm}
\end{table}
\subsection{Running time experiments}
This section compares the running time performance of the five Jacobi-like algorithms listed previously. The algorithms are tested on normal matrices generated from distributions influencing the computational cost of step II in~\cref{alg:normalpaardekooper}. The parameters are:
\begin{itemize}
\item The proportion $\alpha_1\in[0,1]$ of real eigenvalues.  This parameter influences the computational cost of step II.2.
\item The proportion $\alpha_2\in[0,1-\alpha_1]$ of eigenvalues with repeated nonzero imaginary parts.  This parameter influences the computational cost of step II.1 and step~II.3.
\end{itemize}
The Schur vectors $Q$ are Haar-distributed in $\mathrm{O}(n)$. The real eigenvalues are sampled from a standard normal distribution $ \mathcal{N}(0,1)$ as well as the real and the imaginary parts of the complex eigenvalues. The repeated imaginary part of the complex eigenvalues is sampled from $ \mathcal{N}(0,1)$ as well. The running time performance of the four algorithms is displayed in \cref{fig:vary_alpha_12} as the dimension $n$ varies for different pairs $(\alpha_1,\alpha_2)$. For all test matrices, \cref{alg:normalpaardekooper} is the fastest method by a factor $5$ to $10$. This is due to the fact that most arithmetic operations of \cref{alg:normalpaardekooper} rely on  fast-converging methods such as Paardekooper's method. Slower methods such as \cref{alg:Zhou} are only used for final accuracy-improvement computations.

\begin{figure}[ht]
	\centering
	\includegraphics[width = 0.48 \textwidth]{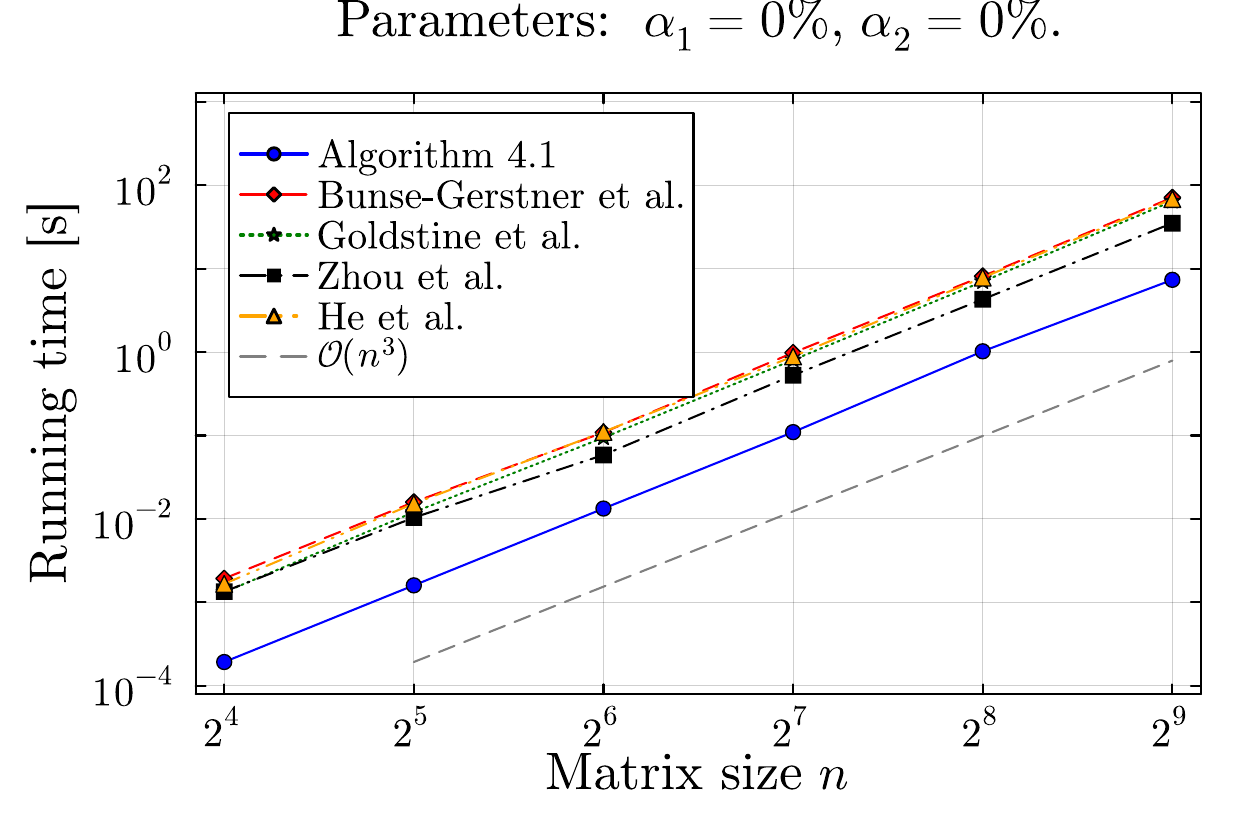}
	\includegraphics[width = 0.48 \textwidth]{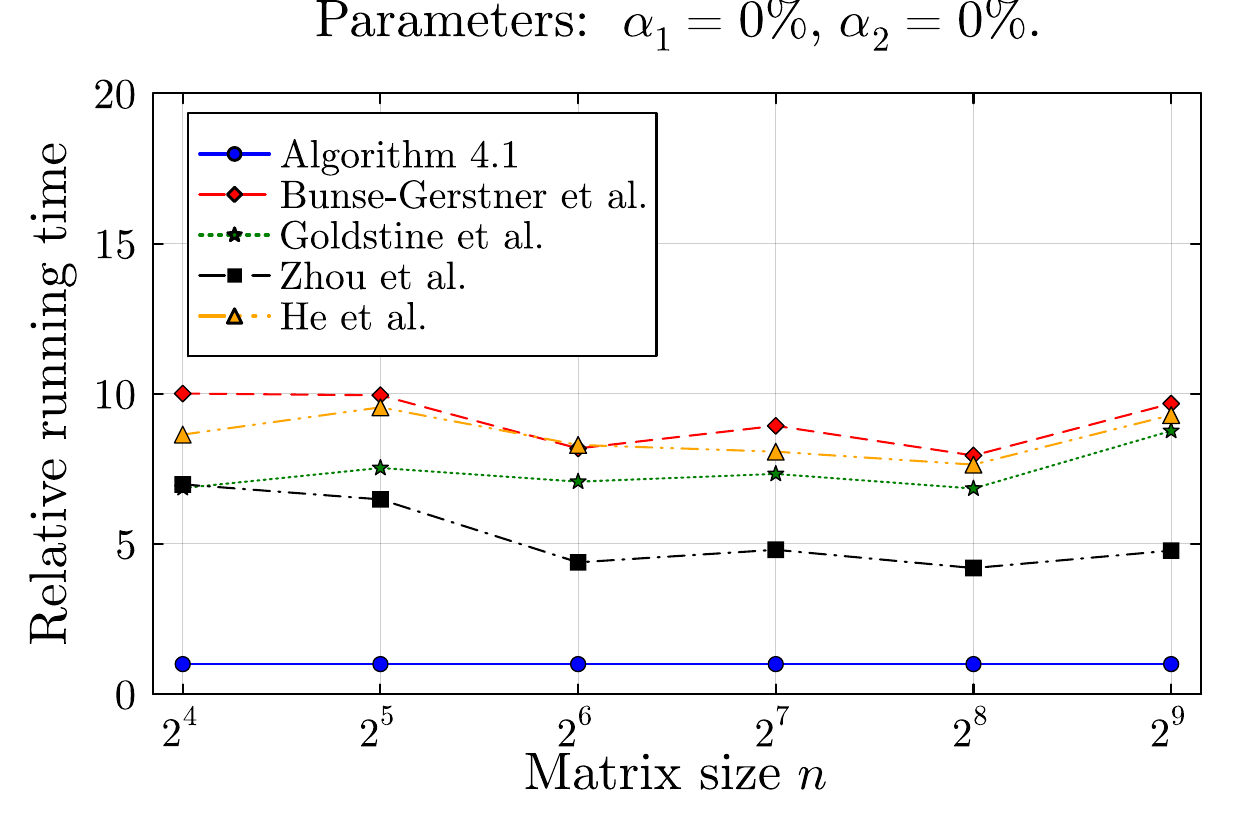}
	\includegraphics[width = 0.48 \textwidth]{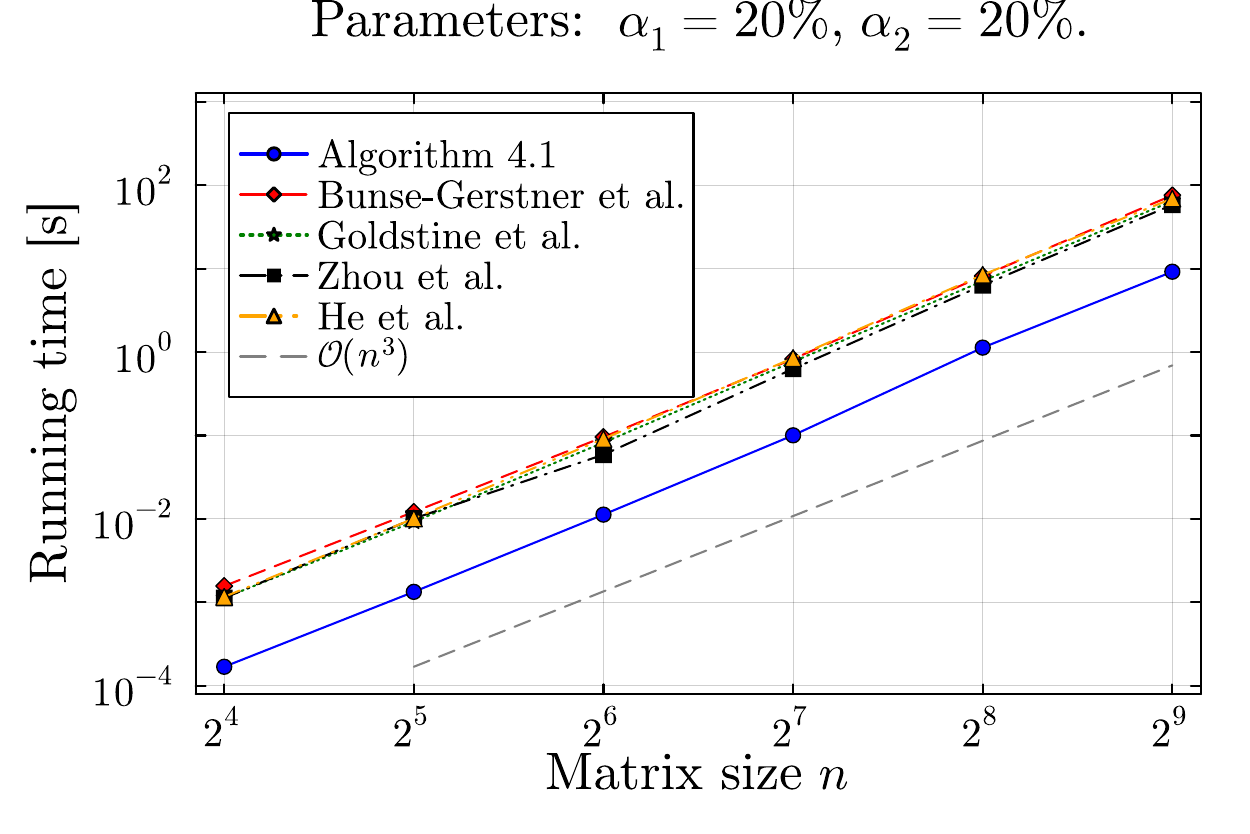}
	\includegraphics[width = 0.48 \textwidth]{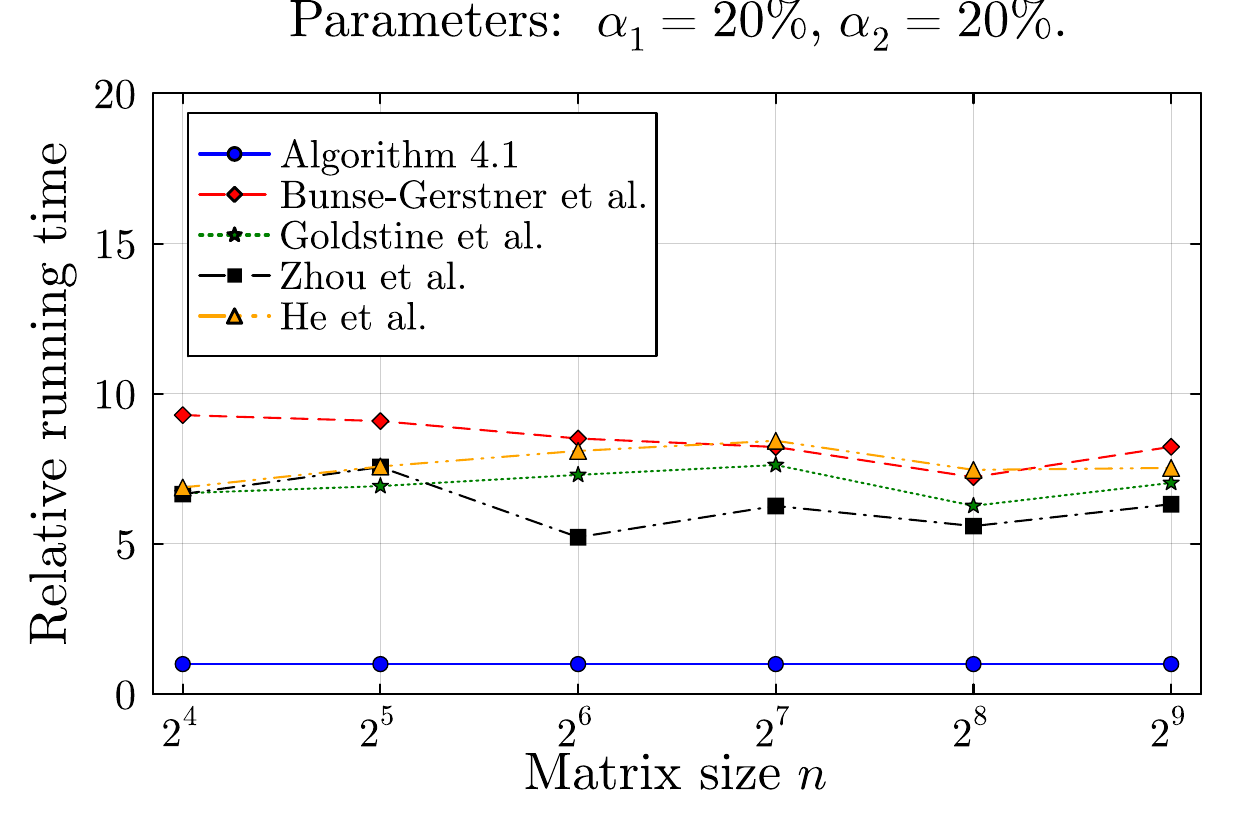}
	\includegraphics[width = 0.48 \textwidth]{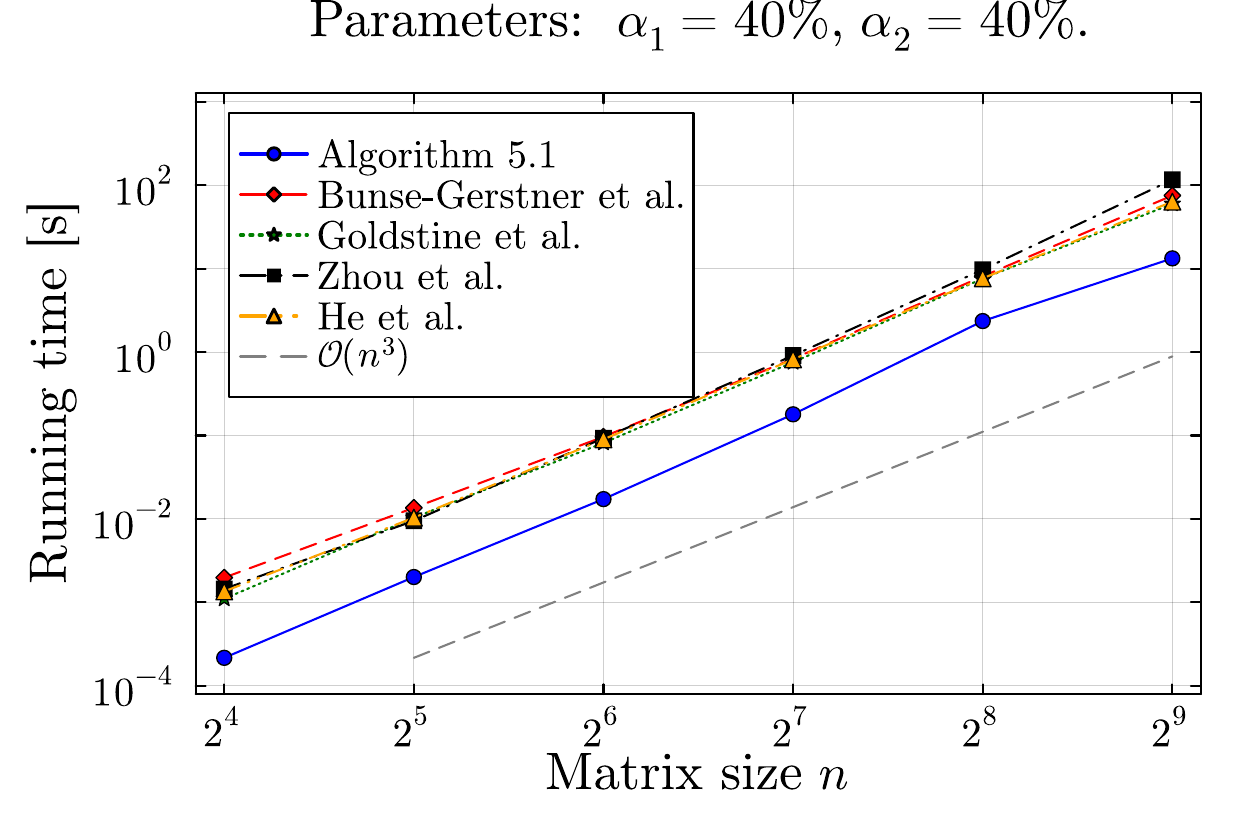}
	\includegraphics[width = 0.48 \textwidth]{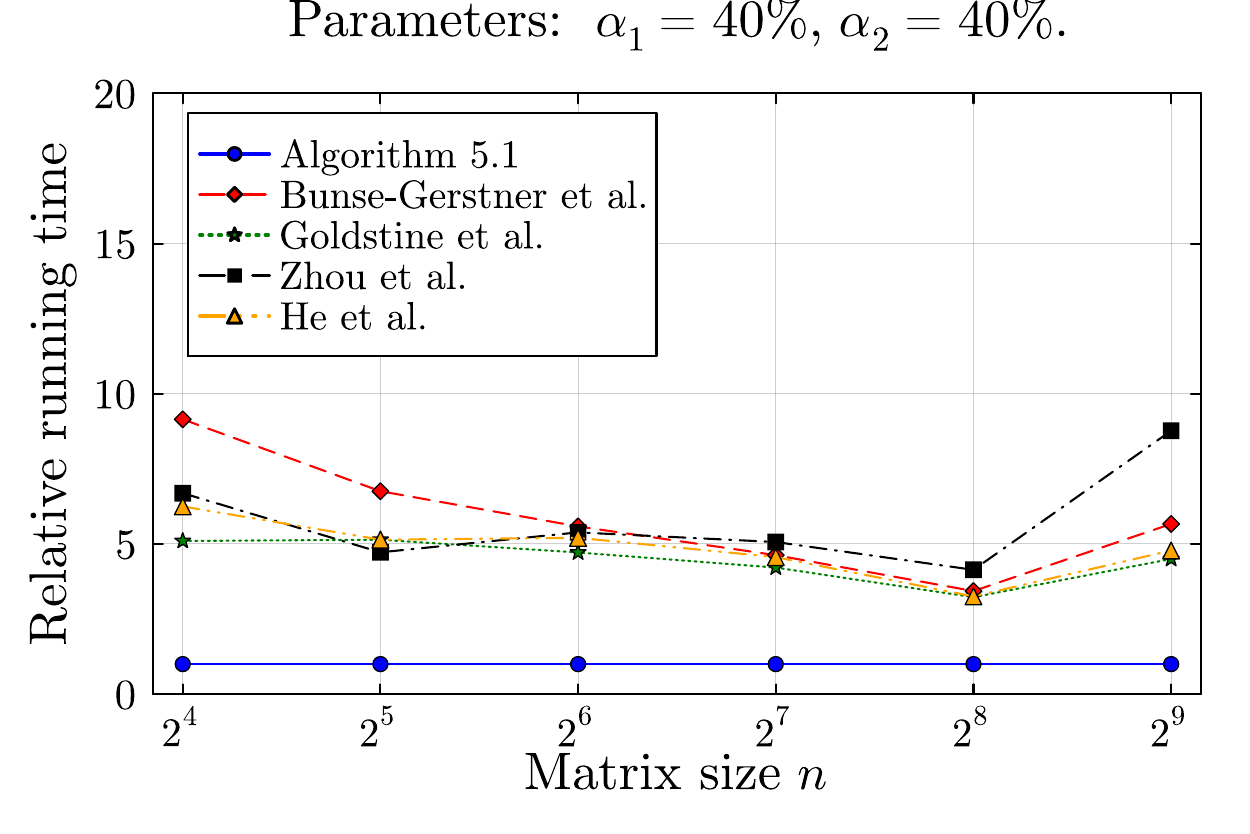}
	\vspace*{-0.3cm}
	\caption{Comparison of the running time of \cref{alg:normalpaardekooper} w.r.t.~the other Jacobi-like algorithms as the matrix size $n$ varies. The parameters $\alpha_1$ and $\alpha_2$ represent, respectively, the proportion of real eigenvalues and that of repeated nonzero imaginary parts of the eigenvalues. On the left, the curves show that absolute running time (in seconds). On the right, the curves show the relative running time w.r.t.~\cref{alg:normalpaardekooper}.}
	\label{fig:vary_alpha_12}
	\vspace*{-0.3cm}
\end{figure}
\section{Nearest symmetric skew-Hamiltonian and ortho-symplectic matrices}\label{sec:SSH_and_os} The design of \cref{alg:normalpaardekooper} and the perturbation analysis of~\cref{sec:floating_arithmetic} required fundamental results on symmetric skew-Hamiltoninan and ortho-symplectic matrices. This section proves these results. Given a matrix $A\in\mathbb{R}^{2m\times 2m}$, let us consider the following partition in $m\times m$ blocks:
\begin{equation*}
	A = \begin{bmatrix}
	A_{11}&A_{12}\\
	A_{21}&A_{22}
	\end{bmatrix}.
\end{equation*} 
\subsection{Nearest symmetric skew-Hamiltonian matrix}
We consider the set $\mathrm{SSkH}(2m)$ of $2m\times 2m$ symmetric skew-Hamiltonian matrices. It can be identified with $m\times m$ Hermitian matrices. Indeed,
\begin{equation}\label{eq:ssh_hermitian}
	\begin{bmatrix}
	\widetilde{H}&-\widetilde{\Omega}\\
	\widetilde{\Omega}&\widetilde{H}
	\end{bmatrix} \in \mathrm{SSkH}(2m)\iff \widetilde{H} + i \widetilde{\Omega} = (\widetilde{H} + i \widetilde{\Omega})^*.
\end{equation}
However, since $\mathbb{R}^{2m\times 2m}$ can not be identified with $\mathbb{C}^{m\times m}$, the nearest symmetric skew-Hamiltonian matrix is not equivalent to the nearest Hermitian matrix.
\begin{theorem}\label{thm:nearest_ssh}
For every matrix $A\in \mathbb{R}^{2m\times 2m}$, the nearest symmetric skew-Hamiltonian matrix to $A$ exists, is unique and is given by
\begin{equation*}
	\argmin_{M\in\mathrm{SSkH}(2m)}\|A - M\|_\mathrm{F} = \mathrm{sskh}(A) \coloneq \frac{1}{2}\begin{bmatrix}
	\mathrm{sym}(A_{11}+A_{22})& -\mathrm{skew}(A_{21}-A_{12})\\
	\mathrm{skew}(A_{21}-A_{12})&\mathrm{sym}(A_{11}+A_{22})
	\end{bmatrix}.
\end{equation*}
\end{theorem}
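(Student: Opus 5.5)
The approach is to view $\mathrm{SSkH}(2m)$ as a linear subspace of $\mathbb{R}^{2m\times 2m}$ equipped with the Frobenius inner product $\langle X,Y\rangle=\mathrm{tr}(X^\top Y)$. The nearest point in a subspace exists and is unique, and it equals the orthogonal projection. It then suffices to show two things: that $\mathrm{sskh}(A)\in\mathrm{SSkH}(2m)$, and that $A-\mathrm{sskh}(A)$ is orthogonal to every element of $\mathrm{SSkH}(2m)$.

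\textbf{Membership.} By definition, the diagonal blocks of $\mathrm{sskh}(A)$ are both $\widetilde H\coloneq\frac12\mathrm{sym}(A_{11}+A_{22})\in\mathrm{Sym}(m)$. The off-diagonal blocks are $\mp\widetilde\Omega$ with $\widetilde\Omega\coloneq\frac12\mathrm{skew}(A_{21}-A_{12})\in\mathrm{Skew}(m)$. Hence $\mathrm{sskh}(A)$ has exactly the form required by the definition of $\mathrm{SSkH}(2m)$.

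\textbf{Orthogonality.} Take an arbitrary $M\in\mathrm{SSkH}(2m)$ with blocks $H\in\mathrm{Sym}(m)$ and $\Omega\in\mathrm{Skew}(m)$. Expanding blockwise gives
\begin{equation*}
\langle A,M\rangle=\langle A_{11}+A_{22},H\rangle+\langle A_{21}-A_{12},\Omega\rangle .
\end{equation*}
Recall that $\mathrm{Sym}(m)\perp\mathrm{Skew}(m)$ in the Frobenius inner product. Therefore $\langle X,H\rangle=\langle\mathrm{sym}(X),H\rangle$ and $\langle Y,\Omega\rangle=\langle \mathrm{skew}(Y),\Omega\rangle$, so
\begin{equation*}
\langle A,M\rangle=\langle \mathrm{sym}(A_{11}+A_{22}),H\rangle+\langle \mathrm{skew}(A_{21}-A_{12}),\Omega\rangle .
\end{equation*}
Now compute $\langle\mathrm{sskh}(A),M\rangle$ by the same expansion. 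The diagonal blocks contribute $2\langle \widetilde H,H\rangle=\langle\mathrm{sym}(A_{11}+A_{22}),H\rangle$, and the off-diagonal blocks contribute $2\langle\widetilde\Omega,\Omega\rangle=\langle\mathrm{skew}(A_{21}-A_{12}),\Omega\rangle$. The two expressions agree, so $\langle A-\mathrm{sskh}(A),M\rangle=0$ for every $M\in\mathrm{SSkH}(2m)$.

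\textbf{Conclusion.} For any $M\in\mathrm{SSkH}(2m)$, Pythagoras gives
\begin{equation*}
\|A-M\|_\mathrm{F}^2=\|A-\mathrm{sskh}(A)\|_\mathrm{F}^2+\|\mathrm{sskh}(A)-M\|_\mathrm{F}^2 .
\end{equation*}
This yields existence of the minimizer, its uniqueness (equality forces $M=\mathrm{sskh}(A)$), and the formula.

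The only delicate point is the bookkeeping of factors and signs in the off-diagonal block inner product. Note that $\langle -A_{12}\,,\,\cdot\,\rangle$ pairs with $-\Omega$, which is why the combination $A_{21}-A_{12}$ appears. Everything else is routine.
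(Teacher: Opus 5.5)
Your proof is correct and takes essentially the same approach as the paper: treat $\mathrm{SSkH}(2m)$ as a linear subspace, show that the residual $A-\mathrm{sskh}(A)$ is Frobenius-orthogonal to it, and conclude existence and uniqueness by Pythagoras. The only difference is bookkeeping. The paper checks orthogonality by writing the residual as the part $\frac12(A-J_{2m}AJ_{2m}^\top)$ plus a sym/skew-swapped remainder and testing each piece against $M$. You instead compare $\langle A,M\rangle$ and $\langle\mathrm{sskh}(A),M\rangle$ directly by block expansion, which is slightly more economical.
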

\begin{proof}
Proving the statement first requires splitting the matrix $A$ as follows:
\begin{align*}
	2A &= (A + J_{2m}AJ_{2m}^\top) + (A - J_{2m}AJ_{2m}^\top)\\
	&=\begin{bmatrix}
	A_{11}+A_{22}&A_{12}-A_{21}\\
	A_{21}-A_{12}&A_{11}+A_{22}
	\end{bmatrix} + \begin{bmatrix}
	A_{11}-A_{22}&A_{12}+A_{21}\\
	A_{21}+A_{12}&A_{22}-A_{11}
	\end{bmatrix}.
\end{align*}
Moreover, a second splitting in symmetric and skew-symmetric parts yields
{\small
\begin{align*}
	\begin{bmatrix}
	A_{11}+A_{22}&A_{12}-A_{21}\\
	A_{21}-A_{12}&A_{11}+A_{22}
	\end{bmatrix} &=\\
	\underbrace{\begin{bmatrix}
	\mathrm{sym}(A_{11}+A_{22})& -\mathrm{skew}(A_{21}-A_{12})\\
	\mathrm{skew}(A_{21}-A_{12})&\mathrm{sym}(A_{11}+A_{22})
	\end{bmatrix}}_{\eqcolon 2\mathrm{sskh}(A)}&+\begin{bmatrix}
	\mathrm{skew}(A_{11}+A_{22})& -\mathrm{sym}(A_{21}-A_{12})\\
	\mathrm{sym}(A_{21}-A_{12})&\mathrm{skew}(A_{11}+A_{22})
	\end{bmatrix}.
\end{align*}}The matrix $\mathrm{sskh}(A)$ is termed the \emph{symmetric skew-Hamiltonian part} of $A$. Since $\mathrm{SSkH}(2m)$ is a linear subspace of $\mathbb{R}^{2m\times 2m}$, $\mathrm{sskh}(A)$ is the nearest SSH matrix to $A$ if and only if $E\coloneq A - \mathrm{sskh}(A)$ is orthogonal to $\mathrm{SSkH}(2m)$. Notice that
{\small
\begin{align*}
	2E&= \underbrace{\begin{bmatrix}
	\mathrm{skew}(A_{11}+A_{22})& -\mathrm{sym}(A_{21}-A_{12})\\
	\mathrm{sym}(A_{21}-A_{12})&\mathrm{skew}(A_{11}+A_{22})
	\end{bmatrix}}_{\eqcolon 2E_1}+\underbrace{\begin{bmatrix}
	A_{11}-A_{22}&A_{12}+A_{21}\\
	A_{21}+A_{12}&A_{22}-A_{11}
	\end{bmatrix}}_{\eqcolon 2E_2}.
\end{align*}}For all $M\in\mathrm{SSkH}(2m)$, by linearity of the inner product, we have
\begin{align*}
	\langle M, 2E\rangle_\mathrm{F} &= \langle M, 2E_1\rangle_\mathrm{F} + \langle M, 2E_2\rangle_\mathrm{F}.
\end{align*}
Let $M= I_2\otimes \widetilde{H} +J_2 \otimes\widetilde{\Omega}$ with $\widetilde{H}\in\mathrm{Sym}(m)$ and $\widetilde{\Omega}\in\mathrm{Skew}(m)$. This yields
\begin{align*}
	\langle M, 2E_1\rangle_\mathrm{F} &= 2\mathrm{Tr}\left(\widetilde{H}\mathrm{skew}(A_{11}+A_{22})-\widetilde{\Omega}\mathrm{sym}(A_{21}-A_{12})\right) =2(0 -0)=0.
\end{align*}
Similarly, we have
\begin{align*}
	\langle M, 2E_2\rangle_\mathrm{F}&=\mathrm{Tr}(\widetilde{H}(A_{11} - A_{22})- \widetilde{\Omega} (A_{21}+A_{12})+\widetilde{H}(A_{22} - A_{11})+ \widetilde{\Omega} (A_{21}+A_{12}))\\
	&=\mathrm{Tr}(\widetilde{H}0 - \widetilde{\Omega} 0)\\
	&=0.
\end{align*}
Therefore, for all $M\in\mathrm{SSkH}(2m)$, $\langle\underbrace{\mathrm{sskh}(A) - M}_{\in\mathrm{SSkH}(2m)}, E\rangle_\mathrm{F}=0$. Thus, it holds that 
\begin{align*}
	\argmin_{M\in\mathrm{SSkH}(2m)}\|A - M\|_\mathrm{F}^2 &=  \argmin_{M\in\mathrm{SSkH}(2m)}\|\mathrm{sskh}(A) - M\|_\mathrm{F}^2+ \|E\|_\mathrm{F}^2\\
	&= \mathrm{sskh}(A).
\end{align*}
\end{proof}
\subsection{Nearest ortho-symplectic matrix}
Consider the set $\mathrm{OSp}(2m)$ of $2m\times 2m$ ortho-symplectic matrices. It is known that $\mathrm{OSp}(2m)$ can be identified with $\mathrm{U}(m)$, the set of $m\times m$ unitary matrices:
\begin{equation*}
\begin{bmatrix}
	U_\mathrm{r}&-U_\mathrm{i}\\
	U_\mathrm{i}&U_\mathrm{r}
	\end{bmatrix} \in\mathrm{OSp}(2m) \iff U_\mathrm{r}+iU_\mathrm{i} \in \mathrm{U}(m).
\end{equation*}
Again, the nearest ortho-symplectic matrix does not readily derive from this identification because the set $\mathbb{R}^{2m\times 2m}$ is larger than $\mathbb{C}^{m\times m}$. In \cref{thm:nearest_os}, an explicit expression for the nearest ortho-symplectic matrix is given. 
From the result of \cref{thm:nearest_os}, an easily computable upper bound on the distance to the ortho-symplectic group is derived in \cref{thm:bound_on_os}. In the absence of ambiguity, we let $I\coloneq I_{2m}$ and $J\coloneq J_2 \otimes I_m$.
\begin{theorem}\label{thm:nearest_os}
	For every matrix $A\in \mathbb{R}^{2m\times 2m}$, $R^\star$ is a nearest ortho-symplectic matrix, i.e., such that
\begin{equation*}
	R^\star =  \begin{bmatrix}
	U_\mathrm{r}^\star&-U_\mathrm{i}^\star\\
	U_\mathrm{i}^\star&U_\mathrm{r}^\star
	\end{bmatrix}\in \argmin_{R\in\mathrm{OSp}(2m)}\|A-R\|_\mathrm{F},
\end{equation*}
if and only if $U_\mathrm{r}^\star+iU_\mathrm{i}^\star = UV^*$ is a polar factor of $\left(\frac{A_{11} + A_{22}}{2}\right) + i\left(\frac{A_{21} - A_{12}}{2}\right)=U\Sigma V^*$. Moreover, 
\begin{equation}\label{eq:os_distance}
	\min_{R\in\mathrm{OSp}(2m)}\|A-R\|_\mathrm{F}^2 = 2\|\Sigma - I\|_\mathrm{F}^2 + \frac14 \left\|AJ_{} - J_{}A\right\|_\mathrm{F}^2.
\end{equation}
\end{theorem}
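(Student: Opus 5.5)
We mirror the proof of \cref{thm:nearest_ssh}: split $A$ orthogonally into a part commuting with $J$ and a part anticommuting with $J$. Write
\begin{equation*}
A = \underbrace{\tfrac12(A + JAJ^\top)}_{\eqcolon A_+} + \underbrace{\tfrac12(A - JAJ^\top)}_{\eqcolon A_-},\qquad
A_+ = \begin{bmatrix} B_\mathrm{r} & -B_\mathrm{i}\\ B_\mathrm{i} & B_\mathrm{r}\end{bmatrix},
\end{equation*}
with $B_\mathrm{r} = \frac{A_{11}+A_{22}}{2}$ and $B_\mathrm{i} = \frac{A_{21}-A_{12}}{2}$. The first step is to check that $A_+$ commutes with $J$ and $A_-$ anticommutes with $J$. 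Then every $R\in\mathrm{OSp}(2m)$ has the same block structure as $A_+$. The second step is to verify $\langle X, Y\rangle_\mathrm{F}=0$ whenever $XJ=JX$ and $YJ=-JY$. This follows from $\mathrm{Tr}(X^\top Y) = \mathrm{Tr}(J^\top X^\top J J^\top Y J) = -\mathrm{Tr}(X^\top Y)$, using $J^\top X J = X$ and $J^\top Y J = -Y$. Consequently
\begin{equation*}
\|A-R\|_\mathrm{F}^2 = \|A_+-R\|_\mathrm{F}^2 + \|A_-\|_\mathrm{F}^2 .
\end{equation*}
Since $A_-J = \frac12(AJ - JAJ^\top J)=\frac12(AJ + JA)$... we should instead compute directly. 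From $JA_-=-A_-J$ we get $AJ-JA = A_-J - JA_- = 2A_-J$, and $J$ is orthogonal, so $\|A_-\|_\mathrm{F}^2 = \frac14\|AJ-JA\|_\mathrm{F}^2$. This gives the second term of \eqref{eq:os_distance}.

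The remaining problem is $\min_{R\in\mathrm{OSp}(2m)}\|A_+-R\|_\mathrm{F}$, which we transfer to the complex setting. For matrices with this real structure, the map $X_\mathrm{r}+iX_\mathrm{i}\mapsto\left[\begin{smallmatrix}X_\mathrm{r}&-X_\mathrm{i}\\X_\mathrm{i}&X_\mathrm{r}\end{smallmatrix}\right]$ is a bijection that scales squared Frobenius norms by $2$. Indeed, $\|X_\mathrm{r}\|^2+\|X_\mathrm{i}\|^2$ appears twice in the real matrix. It also maps $\mathrm{U}(m)$ onto $\mathrm{OSp}(2m)$, as recalled before the statement. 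Writing $B = B_\mathrm{r}+iB_\mathrm{i}$, we therefore need
\begin{equation*}
\min_{R}\|A_+-R\|_\mathrm{F}^2 = 2\min_{U'\in\mathrm{U}(m)}\|B-U'\|_\mathrm{F}^2 .
\end{equation*}

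We then apply the classical complex polar result (Fan--Hoffman). Expanding gives $\|B-U'\|_\mathrm{F}^2 = \|B\|_\mathrm{F}^2 + m - 2\,\mathrm{Re}\,\mathrm{Tr}(U'^*B)$. The trace satisfies $\mathrm{Re}\,\mathrm{Tr}(U'^*U\Sigma V^*)\le \mathrm{Tr}(\Sigma)$, with equality if and only if $U'$ is a unitary polar factor of $B$. For singular $B$ this means $U'=UV^*$ for \emph{some} SVD of $B$, which explains the wording ``a polar factor''. At the optimum the value is $\|\Sigma-I\|_\mathrm{F}^2$, which gives the first term $2\|\Sigma-I\|_\mathrm{F}^2$.

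The main obstacle is the ``only if'' direction of the equality case when $B$ is singular. We will handle it via the von Neumann trace inequality: equality forces $W=V^*U'^*U$ to satisfy $\mathrm{Re}\,\mathrm{Tr}(W\Sigma)=\mathrm{Tr}\,\Sigma$. This makes $W$ equal to the identity on the range of $\Sigma$, and $W$ is unitary on the kernel. The kernel freedom can then be absorbed into the choice of SVD.
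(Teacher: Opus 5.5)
Your proposal is correct and follows the paper's proof essentially step for step. Both use the orthogonal splitting $A=\frac12(A+JAJ^\top)+\frac12(A-JAJ^\top)$, the identity $\|\frac12(A-JAJ^\top)\|_\mathrm{F}=\frac12\|AJ-JA\|_\mathrm{F}$, the factor-$2$ identification with the complex nearest-unitary problem, and the polar-factor characterization, which you additionally prove (including the singular case) where the paper simply calls it well known; the only blemish is your abandoned line, which should read $A_-J=\frac12(AJ-JA)$ because $J^\top J=I$ and which then gives the same conclusion directly.
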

\begin{proof}
The result follows from the following sequence of identities, where, in particular, the first identity follows from $\langle A + JAJ^\top -R,A-JAJ^\top\rangle_\mathrm{F}=0$.
{\small
\begin{align*}
	\min_{R\in\mathrm{OSp}(2m)}\|A-R\|_\mathrm{F}^2&= \small\min_{R\in\mathrm{OSp}(2m)}\left\|\frac{A + J_{}AJ_{}^\top}{2}-R\right\|_\mathrm{F}^2+\left\|\frac{A - J_{}AJ_{}^\top}{2}\right\|_\mathrm{F}^2\\
	&= \min_{U_\mathrm{r}+iU_\mathrm{i}\in\mathrm{U}(m)}\left\|\frac12 \small\begin{bmatrix}
	A_{11}+A_{22}&A_{12}-A_{21}\\
	A_{21}-A_{12}&A_{11}+A_{22}
	\end{bmatrix}-\small\begin{bmatrix}
	U_\mathrm{r}&-U_\mathrm{i}\\
	U_\mathrm{i}&U_\mathrm{r}
	\end{bmatrix}\right\|_\mathrm{F}^2\\&+\frac14 \left\|AJ_{} - J_{}A\right\|_\mathrm{F}^2\\
	&= \min_{U_\mathrm{r}+iU_\mathrm{i}\in\mathrm{U}(m)}2\left\|\frac12 \begin{bmatrix}
	A_{11}+A_{22}\\
	A_{21}-A_{12}
	\end{bmatrix}-\begin{bmatrix}
	U_\mathrm{r}\\
	U_\mathrm{i}
	\end{bmatrix}\right\|_\mathrm{F}^2+\frac14 \left\|AJ_{} - J_{}A\right\|_\mathrm{F}^2\\
	&= \min_{U_\mathrm{r}+iU_\mathrm{i}\in\mathrm{U}(m)}2\left\|\left(\frac{A_{11} + A_{22}}{2}\right) + i\left(\frac{A_{21} - A_{12}}{2}\right)-(U_\mathrm{r}+iU_\mathrm{i})\right\|_\mathrm{F}^2\\&+\frac14 \left\|AJ_{} - J_{}A\right\|_\mathrm{F}^2.
\end{align*}}Finally, it is well known that a matrix is a nearest unitary matrix if and only if it is a polar factor of $\left(\frac{A_{11} + A_{22}}{2}\right) + i\left(\frac{A_{21} - A_{12}}{2}\right)$. Moreover, the distance to the nearest unitary matrix is equal to $\|U\Sigma V^*-UV^*\|_\mathrm{F} = \|\Sigma-I\|_\mathrm{F}$. This concludes the proof.
\end{proof}
The proof of~\cref{thm:accuracy_repeated_improved} requires an explicit upper bound in terms of $A$ on the distance to a nearest ortho-symplectic matrix. We give this bound in~\cref{thm:bound_on_os}.
\begin{theorem}\label{thm:bound_on_os}
For every matrix $A\in\mathbb{R}^{2m\times 2m}$, we have
\begin{align*}
	\min_{R\in\mathrm{OSp}(2m)}\|A-R\|_\mathrm{F}^2\leq \frac{1}{4}\left(\|A^\top A - I_{}\|_\mathrm{F} + \|A^\top J_{} A - J_{}\|_\mathrm{F}\right)^2 + \frac{1}{4}\left\|AJ_{} - J_{}A\right\|_\mathrm{F}^2.
\end{align*}
\end{theorem}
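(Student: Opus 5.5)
The plan is to combine \cref{thm:nearest_os} with a polar-factor perturbation estimate. By \eqref{eq:os_distance} it suffices to prove
\begin{equation*}
2\|\Sigma-I\|_\mathrm{F}^2\leq \tfrac14\left(\|A^\top A-I\|_\mathrm{F}+\|A^\top JA-J\|_\mathrm{F}\right)^2 ,
\end{equation*}
because the commutator term $\frac14\|AJ-JA\|_\mathrm{F}^2$ appears identically on both sides.

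First I would pass to the real matrix $B\coloneq \frac12(A+JAJ^\top)$, which commutes with $J$. As the proof of \cref{thm:nearest_os} shows, $B$ is the real $2m\times 2m$ representation of the complex matrix $C=\frac{A_{11}+A_{22}}{2}+i\frac{A_{21}-A_{12}}{2}=U\Sigma V^*$. Hence the singular values of $B$ are those of $\Sigma$, each repeated twice, and $2\|\Sigma-I\|_\mathrm{F}^2=\|B-R^\star\|_\mathrm{F}^2$, where $R^\star$ is the real representation of $UV^*$. Writing the singular values as $s_k\geq 0$, the elementary inequality $|s_k-1|\leq|s_k-1|(s_k+1)=|s_k^2-1|$ gives
\begin{equation*}
2\|\Sigma-I\|_\mathrm{F}^2=\|B-R^\star\|_\mathrm{F}^2\leq\|B^\top B-I\|_\mathrm{F}^2 .
\end{equation*}
It therefore remains to show $\|B^\top B-I\|_\mathrm{F}\leq\frac12(\|A^\top A-I\|_\mathrm{F}+\|A^\top JA-J\|_\mathrm{F})$.

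For this step I would expand the product, using $J^\top J=I$ and $J^\top=-J$:
\begin{equation*}
4B^\top B=A^\top A+A^\top JAJ^\top+JA^\top J^\top A+JA^\top AJ^\top .
\end{equation*}
Subtracting $4I$ split as four copies of $I$, the four terms regroup as
\begin{equation*}
(A^\top A-I),\quad J(A^\top A-I)J^\top,\quad (A^\top JA-J)J^\top,\quad J(A^\top J^\top A-J^\top).
\end{equation*}
The last factor satisfies $A^\top J^\top A-J^\top=-(A^\top JA-J)$. Since multiplication by the orthogonal matrix $J$ preserves the Frobenius norm, the triangle inequality yields $4\|B^\top B-I\|_\mathrm{F}\leq 2\|A^\top A-I\|_\mathrm{F}+2\|A^\top JA-J\|_\mathrm{F}$. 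Squaring this and combining it with the previous paragraph concludes the proof.

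The main care point is the identification in the first step: checking that the singular values of $B$ are exactly those of $\Sigma$ doubled, and that $\|B-R^\star\|_\mathrm{F}^2=2\|\Sigma-I\|_\mathrm{F}^2$. This follows from the real-representation isometry already used in \cref{thm:nearest_os}, where $\|X\|_\mathrm{F}^2$ of the real representation equals $2\|X_\mathbb{C}\|_\mathrm{F}^2$. The remaining algebra is routine.
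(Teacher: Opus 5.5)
Your proposal is correct and follows the paper's proof essentially step for step. You reduce via \eqref{eq:os_distance} to bounding $2\|\Sigma-I\|_\mathrm{F}^2$, use $|s-1|\leq|s^2-1|$ to pass to $\|B^\top B-I\|_\mathrm{F}$ for $B=\frac12(A+JAJ^\top)$, and expand $4B^\top B$ into four terms bounded by the triangle inequality and the orthogonality of $J$. The only cosmetic difference is that the paper calls the complex matrix $B$ and applies the real-representation isometry midway through its chain of equalities, whereas you work with the real matrix from the start.
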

\begin{proof}
Let us define $B = \left(\frac{A_{11} + A_{22}}{2}\right) + i\left(\frac{A_{21} - A_{12}}{2}\right) = U\Sigma V^*$. Then it follows that
\begin{align*}
2\|\Sigma-I_m\|_\mathrm{F}^2&\leq 2\|\Sigma^2-I_m\|_\mathrm{F}^2\\
&=2\|B^*B - I_m\|_\mathrm{F}^2\\
&=\|\frac{1}{4}(A + J_{}AJ_{}^\top)^\top (A + J_{}AJ_{}^\top)-I_{}\|_\mathrm{F}^2\\
&= \|\frac{1}{4}(A^\top A + J_{}A^\top AJ_{}^\top + A^\top J_{}AJ_{}^\top + J_{}A^\top J_{}^\top A  )-I_{}\|_\mathrm{F}^2\\
&\leq \frac{1}{16}\left(2\|A^\top A - I_{} \|_\mathrm{F}+2\|A^\top J_{} A J_{}^\top - I_{} \|_\mathrm{F}\right)^2\\
&= \frac{1}{4}\left(\|A^\top A - I_{} \|_\mathrm{F}+\|A^\top J_{} A - J_{} \|_\mathrm{F}\right)^2.
\end{align*}
By inserting the latter inequality in \eqref{eq:os_distance}, the claim follows.
\end{proof}
The following \cref{cor:bound_on_os} derives directly from~\cref{thm:bound_on_os}. It is used in the proof of~\cref{thm:accuracy_repeated_improved}
\begin{corollary}\label{cor:bound_on_os}
For every matrix $A\in\mathbb{R}^{2m\times 2m}$, we have
\begin{equation*}
	\min_{R\in\mathrm{OSp}(2m)}\|A-R\|_\mathrm{F}\leq \frac{1}{2}\left(\|A^\top A - I_{}\|_\mathrm{F} + \|A^\top J_{} A - J_{}\|_\mathrm{F}+\left\|AJ_{} - J_{}A\right\|_\mathrm{F}\right).
\end{equation*}
\end{corollary}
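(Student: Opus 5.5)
The corollary follows from \cref{thm:bound_on_os} by taking square roots and applying an elementary inequality. Set
\[
a \coloneq \|A^\top A - I\|_\mathrm{F} + \|A^\top J A - J\|_\mathrm{F} \geq 0, \qquad b \coloneq \|AJ - JA\|_\mathrm{F} \geq 0.
\]
Then \cref{thm:bound_on_os} reads
\[
\min_{R\in\mathrm{OSp}(2m)}\|A-R\|_\mathrm{F}^2 \leq \tfrac14 a^2 + \tfrac14 b^2 .
\]
The minimum exists because $\mathrm{OSp}(2m)$ is compact, as also guaranteed by \cref{thm:nearest_os}. The minimal distance is nonnegative, so taking square roots gives
\[
\min_{R\in\mathrm{OSp}(2m)}\|A-R\|_\mathrm{F} \leq \tfrac12\sqrt{a^2+b^2}.
\]

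Since $a,b\geq 0$, we have $a^2+b^2 \leq a^2 + 2ab + b^2 = (a+b)^2$, hence $\sqrt{a^2+b^2}\leq a+b$. Substituting this yields
\[
\min_{R\in\mathrm{OSp}(2m)}\|A-R\|_\mathrm{F}\leq \tfrac12\left(\|A^\top A - I\|_\mathrm{F} + \|A^\top J A - J\|_\mathrm{F}+\|AJ - JA\|_\mathrm{F}\right),
\]
which is exactly the claim.

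There is no real obstacle here. The only care needed is the nonnegativity of $a$ and $b$, which justifies both the square root step and the bound $\sqrt{a^2+b^2}\le a+b$.
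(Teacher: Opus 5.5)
Your proof is correct and takes the same approach as the paper, which only says the corollary ``derives directly'' from \cref{thm:bound_on_os}. The intended step is what you wrote: take square roots in \cref{thm:bound_on_os} and use $\sqrt{a^2+b^2}\le a+b$ for $a,b\ge 0$.
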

\section{Conclusion}
In this paper, we have designed a new Jacobi-like algorithm for computing the real Schur decomposition of a real normal matrix. We have shown in the numerical experiments of~\cref{sec:numerical_experiments} that \cref{alg:normalpaardekooper} is accurate and faster than the other Jacobi-like algorithms for normal matrices. An important contribution to this line of work would include an open-source multithreaded implementation of \cref{alg:normalpaardekooper}.
\appendix
\section{Improved perturbation theorem}
In this section we provide an improved version of \cite[Thm.~4.6]{mataignegallivan2025} imposing $R\in\mathrm{OSp}(2m)$ instead of $R\in\mathrm{O}(2m)$ for
		$$\min_{R\in\mathrm{OSp}(2m)}\|A(VR)-(VR)\left[\begin{smallmatrix}
		D_m&-\sigma I_m\\
		\sigma I_m&D_m
		\end{smallmatrix}\right]\|_\mathrm{F}, $$
where $D_m$ is a diagonal matrix. This improved version requires the results of \cref{thm:bound_on_os} and \cref{cor:bound_on_os}, obtained in \cref{sec:SSH_and_os}. Up to~\eqref{eq:slave2}, the proof of \cref{thm:accuracy_repeated_improved} is rigorously identical to that of \cite[Thm.~4.6]{mataignegallivan2025}. As previously, we use the short-hand notation $c_j\coloneq \cos(\theta_j)$ and $s_j = \sin(\theta_j)$.
\begin{theorem}\label{thm:accuracy_repeated_improved}
Let an RSD $A=QSQ^\top \in\mathbb{R}^{2p\times 2p}$ of a normal matrix with eigenvalues $\lambda_j e^{\pm i\theta_j}$, $\lambda_j>0,\ \theta_j\in(0,\pi)$ for $j=1,...,p$ and $\Omega \coloneq \mathrm{skew}(A)$. Assume there is $[j]\subseteq\{1,...,p\}$ such that $ \lambda_{j} s_{j}=\sigma$ for all $j\in[j]$. Set $[i]\coloneq\{1,...,p\}\setminus [j]$, $m\coloneq |[j]|$ and let $D_m$ be a diagonal matrix containing the real parts $\lambda_jc_j$ for all $j\in[j]$. If the matrix $V\in\mathrm{St}(2p,2m)$ satisfies $\Omega V-V\sigma\left[\begin{smallmatrix}0&-I_m\\
	I_m&0\end{smallmatrix}\right]=E$ with $\|E\|_\mathrm{F}\leq \tau \varepsilon_\mathrm{m} \|A\|_\mathrm{F}$, then we have
\begin{align*}
	&\min_{R\in\mathrm{OSp}(2m)}\|A(VR)-(VR)\left[\begin{smallmatrix}
		D_m&-\sigma I_m\\
		\sigma I_m&D_m
		\end{smallmatrix}\right]\|_\mathrm{F} \\
		&\leq\tau \varepsilon_\mathrm{m} \|A\|_\mathrm{F} \left(1 + \frac{\max\limits_{i\in[i],j\in[j]}|\lambda_i c_i-\lambda_j c_j|}{\min\limits_{i\in[i]}|\lambda_i s_i-\sigma|}+ \frac{\max\limits_{j_1,j_2\in[j]}|\lambda_{j_1} c_{j_1}-\lambda_{j_2} c_{j_2}|}{\sigma}\right)+\mathrm{o}(\varepsilon_\mathrm{m}).
\end{align*}
\end{theorem}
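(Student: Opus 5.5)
Work in the Schur basis: replace $A$ by $S$ and $V$ by $Q^\top V$. Both $\|\cdot\|_\mathrm{F}$ and the hypothesis are invariant, so we may assume $A=S$ is block diagonal and $\Omega=\mathrm{skew}(S)$ is block diagonal with blocks $\lambda_k s_k J_2$. After an even-odd permutation, group the coordinates into the eigenspace $\mathcal{E}$ of $\Omega$ for $\pm i\sigma$ (indices in $[j]$) and its complement (indices in $[i]$). Write $V=\begin{bmatrix}V_1\\ V_2\end{bmatrix}$ accordingly, with $V_1\in\mathbb{R}^{2m\times 2m}$ and $V_2$ the rest, and set $J\coloneq J_2\otimes I_m$. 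On $\mathcal{E}$, in the permuted coordinates, $\Omega|_{\mathcal{E}}=\sigma J$ and $A|_{\mathcal{E}}=D_m\otimes I_2+\sigma J$, i.e.\ $\left[\begin{smallmatrix}D_m&-\sigma I_m\\ \sigma I_m&D_m\end{smallmatrix}\right]$ up to the same permutation.

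\textbf{Step 1 (complement part).} The hypothesis restricted to the $[i]$ rows reads $\Omega_2 V_2 - V_2\sigma J = E_2$. Since $\Omega_2$ and $\sigma J$ have disjoint spectra with separation $\min_{i\in[i]}|\lambda_i s_i-\sigma|$ (both are normal), the Sylvester bound gives $\|V_2\|_\mathrm{F}\le \|E_2\|_\mathrm{F}/\min_i|\lambda_is_i-\sigma|$. Consequently $V_1^\top V_1=I-V_2^\top V_2=I+\mathcal{O}(\varepsilon_\mathrm{m}^2)$, so $V_1$ is orthogonal up to second order.

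\textbf{Step 2 (the $\mathcal{E}$ part).} The $[j]$ rows give $\sigma(JV_1-V_1J)=E_1$, so $\|V_1J-JV_1\|_\mathrm{F}\le\|E_1\|_\mathrm{F}/\sigma$. This step requires checking how the $D_m$ part interacts with $J$. By \cref{cor:bound_on_os}, there is $R_0\in\mathrm{OSp}(2m)$ with $\|V_1-R_0\|_\mathrm{F}\le\frac12(\|V_1^\top V_1-I\|+\|V_1^\top JV_1-J\|+\|V_1J-JV_1\|)$. The first term is $\mathrm{o}(\varepsilon_\mathrm{m})$. The second satisfies $V_1^\top JV_1-J=V_1^\top(JV_1-V_1J)+(V_1^\top V_1-I)J$, so it is bounded by $\|E_1\|/\sigma+\mathrm{o}(\varepsilon_\mathrm{m})$. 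Hence $\|V_1-R_0\|\lesssim \|E_1\|_\mathrm{F}/\sigma+\mathrm{o}(\varepsilon_\mathrm{m})$, with a constant that I expect to be absorbed as in \cite[Thm.~4.6]{mataignegallivan2025}.

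\textbf{Step 3 (assemble).} Choose $R=R_0^\top$ in the minimum. Since $R_0$ commutes with $J$, it commutes with $\sigma J$. Then
\[
AVR-VR\left[\begin{smallmatrix}D_m&-\sigma I_m\\ \sigma I_m&D_m\end{smallmatrix}\right]
=\begin{bmatrix}A_1V_1R_0^\top-V_1R_0^\top(\widetilde D+\sigma J)\\ A_2V_2R_0^\top-V_2R_0^\top(\widetilde D+\sigma J)\end{bmatrix},
\]
where $\widetilde D=\left[\begin{smallmatrix}D_m&0\\0&D_m\end{smallmatrix}\right]$ and the $[j]$ block satisfies $A_1=\widetilde D+\sigma J$ up to permutation.

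The lower block splits into the skew part, which reproduces $E_2R_0^\top$, and the symmetric part $\mathrm{sym}(A_2)V_2-V_2\,R_0^\top\widetilde D R_0$. The symmetric part is bounded by $\max|\lambda_ic_i-\lambda_jc_j|\,\|V_2\|$ after centering both terms at a common shift. With Step 1 this yields the middle term of the bound.

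For the upper block, write $V_1R_0^\top=I+F$ with $\|F\|=\|V_1-R_0\|$. The block becomes $(\widetilde D+\sigma J)F-F(\widetilde D+\sigma J)$ plus a term involving $E_1$. The $\sigma J$ commutator contributes exactly $E_1R_0^\top$, and the $\widetilde D$ commutator is bounded by $\max|\lambda_{j_1}c_{j_1}-\lambda_{j_2}c_{j_2}|\,\|F\|$. Together with Step 2 this gives the last term.

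Summing the three contributions, and using $\|E_1\|^2+\|E_2\|^2=\|E\|^2\le(\tau\varepsilon_\mathrm{m}\|A\|_\mathrm{F})^2$, yields the claim.

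\textbf{Main obstacle.} The hard part is making the constants in Step 2 come out to exactly $1$ rather than a factor like $\frac32$. Either the triangle inequality must be merged carefully via $\|E_1\|+\|E_2\|\le\sqrt2\|E\|$-type bounds, or the component of $F$ that commutes with $J$ must be shown to be of second order, so that only the anticommuting part, controlled by $\|E_1\|/\sigma$, survives to first order. I would try the second route first, decomposing $F$ into its $J$-commuting and $J$-anticommuting parts.
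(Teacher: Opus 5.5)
Your proposal is correct and follows essentially the paper's argument: a Sylvester bound on the $[i]$ rows, \cref{cor:bound_on_os} applied to the nearly orthogonal $[j]$ block, and a split of the residual into a skew part (which reproduces $E$ because $R$ commutes with $J$) and a symmetric part bounded by the spread of the real parts; your identity $V_1^\top JV_1-J=V_1^\top(JV_1-V_1J)+(V_1^\top V_1-I)J$ is a slightly more direct substitute for the paper's use of $X^\top\widetilde{E}$. The ``main obstacle'' you flag does not exist, because your Step 2 already gives $\|V_1-R_0\|_\mathrm{F}\le\frac12\bigl(\|E_1\|_\mathrm{F}/\sigma+\|E_1\|_\mathrm{F}/\sigma\bigr)+\mathrm{o}(\varepsilon_\mathrm{m})=\|E_1\|_\mathrm{F}/\sigma+\mathrm{o}(\varepsilon_\mathrm{m})$, which is exactly how the paper obtains the constant $1$, and stacking the skew contributions as $[E_1;E_2]R_0^\top$ gives exactly $\|E\|_\mathrm{F}$.
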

\begin{proof}	
	Define $X\coloneq Q^\top V$, then it follows that
	\begin{equation*}
	\mathrm{skew}(S)X-X\sigma \begin{bmatrix}0&-I_m\\
	I_m&0\end{bmatrix}=Q^\top E\eqcolon \widetilde{E}.
	\end{equation*}
There exist two permutations $P_\mathrm{l}\in\mathrm{O}(n),P_\mathrm{r}\in\mathrm{O}(2m) $ such that $P_\mathrm{l}^\top X P_\mathrm{r}=\left[\begin{smallmatrix}X_{[i],[j]}  \\X_{[j],[j]}\end{smallmatrix}\right]$. The blocks $X_{[i],[j]}$ and $X_{[j],[j]}$ comprise, respectively, all $2\times 2$ blocks $X_{ij}$ and $X_{jj}$ such that for all $i\in[i], j\in[j]$, we have
\begin{equation}\label{eq:sylvester_x}
\begin{bmatrix}
		0&-\lambda_i s_i\\
		\lambda_i s_i&0
	\end{bmatrix}X_{ij}-X_{ij}\begin{bmatrix}
		0&-\sigma\\
		\sigma&0
	\end{bmatrix}=(P_\mathrm{l}^\top \widetilde{E} P_\mathrm{r})_{ij}.
\end{equation} 
If it hold that $\widetilde{E}=0$, then we would have $X_{[i],[j]}=0 $ and $X_{[j],[j]}\in\mathrm{O}(2m)$. However, since $\widetilde{E}\neq 0$ and $X\in\mathrm{St}(n,2m)$, we have
\begin{align*}
 X_{[i],[j]}^\top X_{[i],[j]} +X_{[j],[j]}^\top X_{[j],[j]} = I_{2m} \Longrightarrow \| X_{[j],[j]}^\top X_{[j],[j]}-I_{2m}\|_\mathrm{F}\leq \| X_{[i],[j]}\|_\mathrm{F}^2,\\
 \text{by \eqref{eq:sylvester_x},}\quad \| X_{[i],[j]}\|_\mathrm{F}^2\leq \sum_{i\in[i], j\in[j]} \frac{1}{|\lambda_i s_i-\sigma|^2}\|(P_\mathrm{l}^\top \widetilde{E} P_\mathrm{r})_{ij}\|_\mathrm{F}^2\leq \frac{\|E\|_\mathrm{F}^2}{\min_{i\in[i]}|\lambda_i s_i-\sigma|^2}.
\end{align*}
The penultimate inequality follows from~\cite[Thm.~VII.2.8]{bhatia97}. The deviation of $X_{[j],[j]}$ from orthogonality can thus be quantified. In addition, by taking the SVD $ X_{[j],[j]}=U_X\Sigma_X V_X^\top$, we can write $\| X_{[j],[j]}^\top X_{[j],[j]}-I_{2m}\|_\mathrm{F}=\|\Sigma_X^2-I_{2m}\|_\mathrm{F}\geq \|\Sigma_X-I_{2m}\|_\mathrm{F}$, where the last inequality stands because $|\alpha^2-1|\geq |\alpha-1|$ for all $\alpha\geq 0$. Define $\left[\begin{smallmatrix}\widetilde{S}_{2(p-m)} &0 \\ 0 &\widetilde{S}_{2m}\end{smallmatrix}\right]\coloneq P_\mathrm{l}^\top S P_\mathrm{l}$, then for all $R\in \mathrm{OSp}(2m)$, we have 
\begin{align}
\nonumber
	\|AVR-VR\widetilde{S}_{2m}\|_\mathrm{F}&=\|SXR - XR\widetilde{S}_{2m}\|_\mathrm{F}\\
	\nonumber
	&\leq\|\mathrm{sym}(S)XR-XR\mathrm{sym}(\widetilde{S}_{2m})\|_\mathrm{F}+\|E\|_\mathrm{F}\\
	\label{eq:master_equation}
	&\leq \|\mathrm{sym}(\widetilde{S}_{2(p-m)})X_{[i],[j]}R-X_{[i],[j]}R\mathrm{sym}(\widetilde{S}_{2m})\|_\mathrm{F}\\
	\nonumber
	&\quad +\|\mathrm{sym}(\widetilde{S}_{2m})X_{[j],[j]}R-X_{[j],[j]}R\mathrm{sym}(\widetilde{S}_{2m})\|_\mathrm{F}+\|E\|_\mathrm{F}.
\end{align}
The first term of \eqref{eq:master_equation} can be bounded by
\begin{align}
\nonumber
	\|\mathrm{sym}(\widetilde{S}_{2(p-m)})X_{[i],[j]}R-X_{[i],[j]}R\mathrm{sym}(\widetilde{S}_{2m})\|_\mathrm{F}&\leq  \max_{i\in[i],j\in[j]}|\lambda_i c_i-\lambda_j c_j|\|X_{[i],[j]}R\|_\mathrm{F}\\
	\label{eq:slave1}
	&\leq \frac{\max_{i\in[i],j\in[j]}|\lambda_i c_i-\lambda_j c_j|}{\min_{i\in[i]}|\lambda_i s_i-\sigma|}\|E\|_\mathrm{F}.
\end{align}
Moreover, the second term of \eqref{eq:master_equation} can also be bounded by
\begin{align}
\nonumber
&\|\mathrm{sym}(\widetilde{S}_{2m})X_{[j],[j]}R-X_{[j],[j]}R\mathrm{sym}(\widetilde{S}_{2m})\|_\mathrm{F}\\
\nonumber
&= \|\mathrm{sym}(\widetilde{S}_{2m})(X_{[j],[j]}R-I_{2m})-(X_{[j],[j]}R-I_{2m})\mathrm{sym}(\widetilde{S}_{2m})\|_\mathrm{F}\\
\label{eq:slave2}
&\leq  \max_{j_1,j_2\in[j]}|\lambda_{j_1} c_{j_1}-\lambda_{j_2} c_{j_2}|\|X_{[j],[j]}R-I_{2m}\|_\mathrm{F}.
\end{align}
For $R^\star\in\argmin_{R\in \mathrm{OSp}(2m)} \|X_{[j],[j]}R-I_{2m}\|_\mathrm{F}$, by \cref{cor:bound_on_os}, we have 
\begin{align*}
\|X_{[j],[j]}R^\star-I_{2m}\|_\mathrm{F}&\leq \frac{1}{2}\big(\|X_{[j],[j]}^\top X_{[j],[j]} - I_{2m}\|_\mathrm{F} + \|X_{[j],[j]}^\top J_{2m} X_{[j],[j]} - J_{2m}\|_\mathrm{F}\\&+\left\|X_{[j],[j]}J_{2m} - J_{2m}X_{[j],[j]}\right\|_\mathrm{F}\big).  
\end{align*}
We know that $\|X_{[j],[j]}^\top X_{[j],[j]} - I_{2m}\|_\mathrm{F}\leq \frac{\|E\|_\mathrm{F}^2}{\min_{i\in[i]}|\lambda_i s_i-\sigma|^2}$ and, by~\eqref{eq:sylvester_x}, we have $\left\|X_{[j],[j]}J_{2m} - J_{2m}X_{[j],[j]}\right\|_\mathrm{F}\leq\frac{\|E\|_\mathrm{F}}{\sigma}$. Moreover, \eqref{eq:sylvester_x} yields
\begin{align*}
X_{[i],[j]}^\top \mathrm{skew}(\widetilde{S}_{2(p-m)})X_{[i],[j]} &+ \sigma X_{[j],[j]}^\top J_{2m} X_{[j],[j]} - \sigma J_{2m} = X^\top \widetilde{E}\\
\Longrightarrow \|X_{[j],[j]}^\top J_{2m} X_{[j],[j]} - J_{2m} \|_\mathrm{F}&\leq \frac{\|E\|_\mathrm{F}}{\sigma} + \frac{1}{\sigma}\|X_{[i],[j]}^\top \mathrm{skew}(\widetilde{S}_{2(p-m)})X_{[i],[j]}\|_\mathrm{F}\\
&\leq \frac{\|E\|_\mathrm{F}}{\sigma} + \frac{1}{\sigma}\|X_{[i],[j]}\|_\mathrm{F}^2 \|A\|_\mathrm{F}\\
&\leq \|E\|_\mathrm{F}\left(\frac{1}{\sigma} + \frac{\|E\|_\mathrm{F}\|A\|_\mathrm{F}}{\sigma \min_{i\in[i]}|\lambda_i s_i-\sigma|^2}\right).
\end{align*}
Acknowledging that $\|E\|_\mathrm{F}\leq \tau \varepsilon_\mathrm{m}\|A\|_\mathrm{F}$, we have
\begin{align*}
	&\min_{R\in\mathrm{OSp}(2m)}\|A(\widehat{V}R)-(\widehat{V}R)\left[\begin{smallmatrix}
		D_m&-\sigma I_m\\
		\sigma I_m&D_m
		\end{smallmatrix}\right]\|_\mathrm{F} \\
		&\leq \|E\|_\mathrm{F}\Biggl(1 + \frac{\max\limits_{i\in[i],j\in[j]}|\lambda_i c_i-\lambda_j c_j|}{\min\limits_{i\in[i]}|\lambda_i s_i-\sigma|} \\
		&+\max_{j_1,j_2\in[j]}|\lambda_{j_1} c_{j_1}-\lambda_{j_2} c_{j_2}|\biggl(\frac{\|E\|_\mathrm{F}}{2\min_{i\in[i]}|\lambda_i s_i-\sigma|^2}  + \frac{1}{\sigma} + \frac{\|E\|_\mathrm{F}\|A\|_\mathrm{F}}{2\sigma \min_{i\in[i]}|\lambda_i s_i-\sigma|^2}\biggr)\Biggr)\\
		&\leq\tau \varepsilon_\mathrm{m} \|A\|_\mathrm{F} \left(1 + \frac{\max\limits_{i\in[i],j\in[j]}|\lambda_i c_i-\lambda_j c_j|}{\min\limits_{i\in[i]}|\lambda_i s_i-\sigma|}+ \frac{\max\limits_{j_1,j_2\in[j]}|\lambda_{j_1} c_{j_1}-\lambda_{j_2} c_{j_2}|}{\sigma}\right)+\mathrm{o}(\varepsilon_\mathrm{m}).
\end{align*} 
\end{proof}
\section{Additional details on Jacobi-like algorithms}
\subsection{The Schur form of a $3\times 3$ skew-symmetric matrix}\label{sec:3x3skew} Paardekooper's method with $n$ odd requires the solution to the $3\times 3$ skew-symmetric EVP. Solving the problem requires two similarity transformations by Givens rotations. The first Givens rotation is obtained from the elements $\omega_{21}$ and $\omega_{31}$:
\begin{equation*}
\begin{bmatrix}
	1&0&0\\
	0&c_1&s_1\\
	0&-s_1&c_1
	\end{bmatrix}
	\underbrace{\begin{bmatrix}
		0&-\omega_{21}&-\omega_{31}\\
		\col{\omega_{21}}&0& - \omega_{32}\\
		\col{\omega_{31}}&\omega_{32}&0
	\end{bmatrix}}_{=\Omega}
	\begin{bmatrix}
	1&0&0\\
	0&c_1&-s_1\\
	0&s_1&c_1
	\end{bmatrix} = 
	\underbrace{\begin{bmatrix}
		0&-\omega_{21}'&0\\
		\col{\omega_{21}'}&0& - \omega_{32}'\\
		\col{0}&\omega_{32}'&0
	\end{bmatrix}}_{=\Omega^{(1)}},
\end{equation*}
where $c_1 = \frac{\omega_{21}}{\sqrt{\omega_{21}^2 + \omega_{31}^2}}$ and $s_1 = \frac{\omega_{31}}{\sqrt{\omega_{21}^2 + \omega_{31}^2}}$ define the Givens rotation. The second transformation is obtained from $-\omega_{21}'$ and $\omega_{32}'$ of $\Omega^{(1)}$:
\begin{equation*}
\small
 \begin{bmatrix}
	c_2&0&s_2\\
	0&1&0\\
	-s_2&0&c_2
	\end{bmatrix}\begin{bmatrix}
		0&\col{-\omega_{21}'}&0\\
		\omega_{21}'&0& - \omega_{32}'\\
		0&\col{\omega_{32}'}&0
	\end{bmatrix}\begin{bmatrix}
	c_2&0&-s_2\\
	0&1&0\\
	s_2&0&c_2
	\end{bmatrix} =\begin{bmatrix}
		0&\col{-\sigma}&0\\
		\sigma&0& 0\\
		0&\col{0}&0
	\end{bmatrix}, \quad \text{with,}\quad \sigma\geq 0,
\end{equation*}
where $c_2 = \frac{\omega_{21}'}{\sqrt{\omega_{21}'^2 + \omega_{32}'^2}}$ and $s_2 = \frac{-\omega_{32}'}{\sqrt{\omega_{21}'^2 + \omega_{32}'^2}}$.
These two steps give a simple, explicit solution to the $3\times 3$ skew-symmetric eigenvalue problem. Notice that $\sigma = \sqrt{\omega_{21}^2 +\omega_{31}^2+\omega_{32}^2 }$.
\subsection{The Jacobi-like algorithm for normal matrices from~\cite[Alg.~2]{BunseGerstner1993} and~\cite{ZhouBrent2003}}
For completeness of this paper, we provide a very simple pseudo-code of the Jacobi-like algorithms for normal matrices from~\cite[Alg.~2]{BunseGerstner1993} and~\cite{ZhouBrent2003} in \cref{alg:Zhou}. While~\cite{ZhouBrent2003} directly assumes the availability of a routine returning the RSD of an arbitrary $4\times 4$ matrix, \cite[Alg.~2]{BunseGerstner1993} is more involved. However, from a high-level point of view, \cite[Alg.~2]{BunseGerstner1993} is equivalent to~\cref{alg:Zhou}.
\begin{algorithm}
    \caption{Jacobi-like algorithm for normal matrices from~\cite[Alg.~2]{BunseGerstner1993} and~\cite{ZhouBrent2003}}
	\begin{algorithmic}
		\State \textbf{Input:} $A\in\mathbb{C}^{n\times n}$ normal, $Q\in\mathrm{Q}(n)$, $l \subseteq \{1,..., n\}$, $|l|\geq 4$ even and $\rho>0$.
		\While{$\mathrm{offschur}(A_{l, l})>\rho \|A\|_\mathrm{F}$}
			\For{$i = 1:2:|l|-3$}
				\For{$j = i+2:2:|l|-1$}
					\State Let $\mathtt{l} = \{l_i,l_{i+1},l_j,l_{j+1} \}$.
					\State Compute the Schur vectors $R$ of $A_{\mathtt{l}, \mathtt{l}}$ where $[R^\top A_{\mathtt{l},\mathtt{l}}R]_{3,2}= 0$.
					\State Update $A_{:,\mathtt{l}} \leftarrow A_{:,\mathtt{l}}R$ and $A_{\mathtt{l},:} \leftarrow R^\top A_{\mathtt{l},:}$.
					\State Update $Q_{:,\mathtt{l}} \leftarrow Q_{:,\mathtt{l}}R$.
				\EndFor
			\EndFor
		\EndWhile
		\Return $A$ and $Q$.
	\end{algorithmic}
	\label{alg:Zhou}
\end{algorithm}

\subsection{A Jacobi-like algorithm based on \texttt{RandDiag}}\label{sec:randdiag}
The randomized method \texttt{RandDiag} for computing the eigenvalue decomposition of a complex normal matrix $A\in\mathbb{C}^{n\times n}$ is proposed in~\cite{He2025}. It relies on the diagonalization of the Hermitian matrix $\widetilde{H} = \frac{\mu_1}{2}(A+A^*) + i \frac{\mu_2}{2}(A-A^*) = U\Lambda U^*$ where $\mu_1,\mu_2\sim \mathcal{N}(0,1)$. The authors show that $U$ diagonalizes $A$ with probability $1$ \cite[Thm.~1]{He2025}. An implicit Jacobi-like algorithm can be derived from \texttt{RandDiag}. Its pseudo-code is given in \cref{alg:randdiag}. It consists of applying Jacobi's algorithm for Hermitian matrices \emph{implicitly} to the Hermitian matrix by applying the rotations on $A$. 
\begin{algorithm}
    \caption{Implicit, cyclic, Jacobi-like \texttt{RandDiag} algorithm}
	\begin{algorithmic}
		\State \textbf{Input:} $A\in\mathbb{C}^{n\times n}$ normal, $Q\in\mathrm{O}(n)$ and $\rho>0$.
		\State Let $\mu_1,\mu_2\sim \mathcal{N}(0,1)$.
		\While{$\mathrm{offdiag}(\frac{\mu_1}{2}(A+A^*) + i \frac{\mu_2}{2}(A-A^*))>\rho\|A\|_\mathrm{F}$}
			\For{$i=1,...,n-1$}
				\For{$j = i+1,...,n$}
					\State Let $\mathtt{l} = \{i,j\}$.
					\State Compute the Hermitian block $H \coloneq \frac{\mu_1}{2}(A_{\mathtt{l},\mathtt{l}}+A_{\mathtt{l},\mathtt{l}}^*) + i \frac{\mu_2}{2}(A_{\mathtt{l},\mathtt{l}}-A_{\mathtt{l},\mathtt{l}}^*)$.
					\State Compute a Jacobi rotation $G$ such that $G^*HG$ is diagonal.
					\State Update $A_{:,\mathtt{l}} \leftarrow A_{:,\mathtt{l}}G$ and $A_{\mathtt{l},:} \leftarrow G^*A_{\mathtt{l},:}$.
					\State Update $Q_{:,\mathtt{l}} \leftarrow Q_{:,\mathtt{l}}G$.
				\EndFor
			\EndFor
		\EndWhile
		\Return $A$ and $Q$.
	\end{algorithmic}
	\label{alg:randdiag}
\end{algorithm}


\end{document}